\theoremstyle{plain}
\newtheorem{proposition}{Proposition}[section]
\newtheorem{theorem}[proposition]{Theorem}
\newtheorem{corollary}[proposition]{Corollary}
\newtheorem{conjecture}[proposition]{Conjecture}
\newtheorem{lemma}[proposition]{Lemma}
\theoremstyle{definition}
\newtheorem{definition}[proposition]{Definition}
\theoremstyle{remark}
\newtheorem{remark}[proposition]{Remark}
\newtheorem{example}[proposition]{\bf Example}
\newcommand{\im}{\operatorname{Im}\nolimits}
\newcommand{\diam}{\operatorname{Diam}\nolimits}
\newcommand{\Rips}{\operatorname{Rips}\nolimits}
\def\g{\gamma}
\def\di{\partial}
\def\f{\varphi}
\def\RR{{\mathbb R}}
\def\ZZ{{\mathbb Z}}
\def\FF{{\mathbb F}}
\def\f{{\varphi}}
\def\B{\mathcal{B}}
\def\N{\mathcal{N}}
\def\M{\mathcal{M}}
\numberwithin{equation}{section}
\title[Contractions in persistence and metric graphs]
{Contractions in persistence and metric graphs}
\author{\v Ziga ~Virk}
\address{University of Ljubljana, Slovenia}
\email{ziga.virk@fri.uni-lj.si}
\thanks{The author would like to thank Teresa Heiss and Arseniy Akopyan for useful discussions on this subject. Research was  supported by Slovenian Research Agency grants No. N1-0114 and P1-0292.}
\begin{document}

\maketitle
\begin{center}
\today
\end{center}

\begin{abstract}
We prove that the existence of a $1$-Lipschitz retraction (a contraction) from a space $X$ onto its subspace $A$ implies the persistence diagram of $A$ embeds into the persistence diagram of $X$. As a tool we introduce tight injections of persistence modules as maps inducing the said embeddings. We show contractions always exist onto shortest loops in metric graphs and conjecture on existence of contractions in planar metric graphs onto all loops of a shortest homology basis. 

Of primary interest are contractions onto loops in geodesic spaces. These act as ideal circular coordinates. Furthermore, as the Theorem of Adamaszek and Adams describes the pattern of persistence diagram of $S^1$, a contraction $X \to S^1$ implies the same pattern appears in persistence diagram of $X$.
\end{abstract}

\section{Introduction}

Persistent homology is a parameterized version of homology that has received a lot of attention in the past two decades. The inherent nature of the scale parameter results in features of persistent homology that are absent in standard homology: persistent homology is stable and also encodes geometric information about the underlying space. Nowadays, the study of persistent homology encompasses a multitude of aspects including algorithmic, geometric, algebro-topological, stochastic, and data analytical points of view. However, despite all the progress little is known about the way persistent homology encodes geometry of the underlying space or how to interpret persistence diagrams, both of which are fundamental questions.

The purpose of this paper is to provide a new interpretation of parts of persistent homology generated via Rips filtrations in terms of the geometry of an underlying space. Given a compact metric space $X$ and a $1$-Lipschitz retraction (called a contraction) $X \to A$ onto a subspace $A\subset X$, we show how persistent homology of $A$ appears within the persistent homology of $X$ itself via the inclusion induced map, see Proposition \ref{PropScale2}. As the \textbf{main result} we prove that in such a case even the persistence diagram of $A$ appears as a subset of the persistence diagram of $X$ (Corollary \ref{CorMain1}). For the purpose of the latter statement we introduce tight embeddings of persistence modules and show they induce inclusions on persistence diagrams (Theorem \ref{ThmTightEmbedding}). This is a property a generic embedding of persistence modules does not posses. We conclude the paper by demonstrating the existence of contractions onto shortest loops in metric graphs (Theorem \ref{ThmContrMetGraphs}) and conjecture contractions on loops of shortest homology basis always exist on planar metric graphs (Conjecture \ref{Conj}).

The importance of our results stems from their interpretative capacity. Suppose we are given an elementary subspace, say a simple geodesically closed loop (i.e., a geodesic circle) $\alpha \subset X$ in a Riemannian manifold $X$. If there is a contraction $X \to \alpha$ our results show that persistence diagram of $\alpha$ is contained in persistence diagram of $X$, see Figure \ref{FigEssence} for an example. As persistence diagram of a geodesic $S^1$ is known \cite{AA} to consist of odd-dimensional points, the same points also appear in persistence diagram of $X$ and thus we are able to deduce parts of the latter. Going in the opposite direction, if we are given a persistence diagram of $X$ (potentially as an approximation, via stability result, arising from a computation of a sample of $X$) which contains the pattern of a persistence diagram of $S^1$, we might expect to find a geodesic circle within $X$. Analogous conclusions may be made for other subspaces of $X$ for which at least a part of persistent homology is known such as certain ellipses and regular polygons, see Related work below.

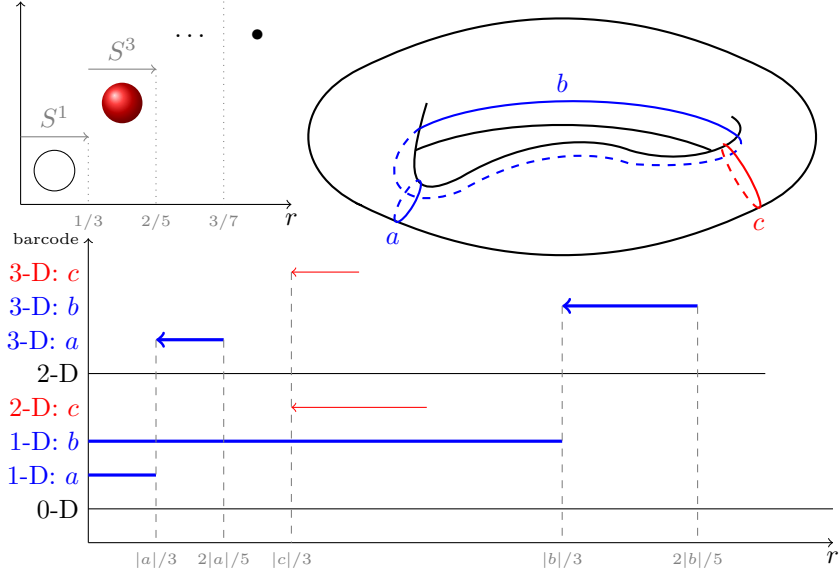
\begin{figure}
\begin{tikzpicture}[scale=.9]
\draw [thick] (-3,0) ..controls (-1,-1)and(1,-1)..
(3,0)..controls(4,.5)and(4, 1.5)..
(3,2)..controls(1,3)and(-1, 3)..
(-3,2)..controls(-4,1.5)and(-4, .5)..
cycle;
\draw [thick](-2,1.5) ..controls (-2.3,.5)and(-2.3, 0)..
(-1.5, .4)..controls(-.5,1) and(.5,1)..
(1,.8)..controls(2, 0.5)and(3, 1)..(2.5,1.3);
\draw  [thick](-2.17,.8) ..controls (-1,1.3)and(1,1.3)..(2.2,.8);
\draw [blue, thick, rotate=-30](-2,-1.45) arc (-90:90:.1 and .35);
\draw [blue, thick, dashed, rotate=-30](-2,-1.45) arc (-90:-270:.1 and .35);
\draw [blue](-2.5, -.5) node {$a$};
\draw [blue, thick](2.6,1) arc (20:150:2.6 and .8);
\draw [thick, blue, dashed](-2.1,1.13) ..controls (-2.8,.6)and(-2.5, -.2)..
(-1.5, 0.2)..controls(-.5,.8) and(.5,.8)..
(1,.6)..controls(2.3, 0.5)and(2.8, .8)..(2.6,1.0);
\draw [blue](0, 1.8) node {$b$};
\draw [red, thick, rotate=30](2.5,-1.5) arc (-90:90:.1 and .54);
\draw [red, thick, dashed, rotate=30](2.5,-1.5) arc (-90:-270:.1 and .55);
\draw [red](2.9, -.3) node {$c$};
\draw[->] (-8, 0) to (-4, 0) node[below]{$r$};
\draw[->] (-8, 0) to (-8, 3) node{};
\draw[->] [gray] (-8,1) to node [above]{$S^1$}(-7, 1);
\draw[dotted, gray]  (-7,1) to (-7, -0) node[below]{\tiny{$1/3$}};
\draw[->] [gray] (-7,2) to node [above]{$S^3$}(-6, 2);
\draw[dotted, gray]  (-6,2) to (-6, 0) node[below]{\tiny{$2/5$}};
\draw (-5.5, 2.5) node {$\ldots$};
\draw[dotted, gray]  (-5,3) to (-5, 0) node[below]{\tiny{$3/7$}};
\draw (-4.5, 2.5) node {$\bullet$};
\draw  (-7.5,.5) circle [radius=0.3];
\shade [ball color=red]  (-6.5,1.5) circle (0.3);
\draw [->] (-7, -5) to (4, -5) node[below] {$r$};
\draw [->] (-7, -5) to (-7, -.5) node[ left] {\tiny barcode};
\draw (-7, -4.5) node[left]{$0$-D} to (4, -4.5);
\draw [blue] (-7, -4)node[left]{ $1$-D: \color{blue} $a$} ;
\draw [blue, very thick] (-7, -4)  to (-6, -4);


\draw [blue] (-7, -3.5)node[left]{ $1$-D: \color{blue} $b$};
\draw [blue, very thick] (-7, -3.5) to (0, -3.5);


\draw (-7, -3)node[left]{\color{red} $2$-D:  $c$};
\draw [<-,red] (-4, -3) to (-2, -3);

\draw  (-7, -2.5)node[left]{ $2$-D} to (3, -2.5) node [right] {};

\draw  [blue](-7, -2)node[left]{ $3$-D: \color{blue} $a$};
\draw  [<-,blue, very thick](-6, -2) to (-5, -2) node [right] {};

\draw (-7, -1.5)node[left]{\color{blue} $3$-D:  $b$};
\draw [<-, blue, very thick] (0, -1.5) to (2, -1.5) node [right] {};
\draw (-7, -1)node[left]{\color{red} $3$-D:  $c$};
\draw [<-, red] (-4, -1)to (-3, -1) node [right] {};



\draw[dashed, gray](-6, -2) to (-6, -5) node[below]{\tiny$|a|/3$};
\draw[dashed, gray](-5, -2) to (-5, -5) node[below]{\tiny$2|a|/5$};

\draw[dashed, gray](-4, -1) to (-4, -5) node[below]{\tiny$|c|/3$};
\draw[dashed, gray](0, -1.5) to (0, -5) node[below]{\tiny$|b|/3$};
\draw[dashed, gray](2, -1.5) to (2, -5) node[below]{\tiny$2|b|/5$};

\end{tikzpicture}
\caption{The upper left side represents the homotopy type of the Rips filtration of a circle (i.e., odd-dimensional spheres) equipped with a geodesic metric by \cite{AA}. The black shape on the right is a two-dimensional torus $X$ and below it is an excerpt from its barcode in dimensions up to $3$. 
There exist contractions $X \to a$ and $X \to b$, yielding the thick odd-dimensional bars corresponding to these two loops by Corollary \ref{CorMain1}. The homology of a torus appears at $r=0$ by \cite{Haus, ZV3} while the later 2- and 3-dimensional bars arising from a geodesic loop $c$ appear by \cite{ZV2}. Note that there is no contraction $X \to c$. The results of \cite{ZV2} require a wide enough neighborhood of a geodesic loop and cannot be used in the case of $b$.
}
\label{FigEssence}
\end{figure}

Besides the mentioned interpretative capacity, our results raise new questions and connections. The first of them is an existence of contractions: it would be of interest to know when such maps $X \to A$ exist, especially onto a simple loop $A=\alpha$. Proposition \ref{PropContr1} gives a simple required condition in geodesic spaces: $\alpha$ should be a member of a shortest homology basis. Example \ref{Ex01} demonstrates that this condition is not sufficient and leads to Conjecture \ref{Conj}. Contractions $X \to \alpha$ actually represent $1$-Lipschitz cohomology classes in dimension $1$ (inspiring Example \ref{Ex02}) and would represent ideal circular coordinates in the sense of \cite{Circ}. A related question would be to determine optimal Lipschitz constants of cohomology classes as maps to Eilenberg-Maclane spaces. On the other hand, existence of contractions could be rephrased as extension problems and in fact a particular case of the Kirszbraun theorem \cite{Kir34,WeWi75}: find all $A \subseteq X$ for which the identity on $A$
extends to a $1$-Lipschitz map on $X$.

The treatment of this paper is taylored for (open) Rips filtrations and induced persistent homology and homotopy groups. Analogous arguments could be made for closed Rips filtrations, any Cech filtration, and more general settings of persistence modules.

----------------
\subsection{Related work}
The following are known results about the geometric information encoded in persistent homology. 

At small scales the Rips complexes of tame spaces attain the homotopy type of the underlying space \cite{Haus, Lat, ZV3}. The entire homotopy type of a Rips filtration is essentially only known in one non-trivial case: $S^1$ \cite{AA}. The methods of \cite{AA} can be used to extract some further results on ellipses \cite{Ad5} and regular polygons \cite{Ad4}. The entire $1$-dimensional persistent homology (and fundamental group) of geodesic spaces has been completely classified in \cite{ZV, ZV1}. Paper \cite{ZV2} (and also \cite{ZV4}) contains a local version of the result of this paper: if a subset $A \subset X$ has a sufficiently nice neighborhood, then parts of its persistent homology embed into persistent homology of $X$. The technical assumptions of these results hold for loops $a$ and $c$ of Figure \ref{FigEssence}, but not $b$. The assumptions of our main results of this paper are much easier to verify and in some cases hold more generally. Overall, persistent homology in dimensions $1,2$, and $3$ is known to encode some geodesic circles and shortest $1$-homology basis by \cite{ZV, ZV2, ZV4} (and now also by results of this paper), properties of thick-thin decomposition \cite{ACos} and injectivity radius \cite{Memoli}. On a similar note, the systole of a geodesic space is detected as the first critical scale of persistent fundamental group \cite{ZV}. Parts of persistent homology of certain spheres have been detected via stability theorem yielding a counterexample to the Hausmann's conjecture \cite{ZVCounterex}.

\section{Preliminaries}
\label{SecPrelims}

We briefly recall the notions used throughout the paper. For extended background see \cite{EH} for persistent homology, \cite{ZV} for persistent fundamental group of geodesic spaces, and \cite{ChaObs} for persistence diagrams and barcodes.

Given a metric space $X$ and $x\in X$, the open ball around $x$ of radius $r>0$ is denoted by $B(x,r),$ while the closed $r$-ball is denoted by $\overline B(x,r)$.
A map $f\colon X \to Y$ between metric spaces is $L$-\textbf{Lipschitz} for $L>0$ if 
$$
d_Y(f(x),f(y)) \leq L \cdot d_X(x,y), \quad \forall x,y\in X.
$$ 
A metric space $X$ is \textbf{geodesic}, if for each $x,y \in X$ there is an isometric embedding $[0,d(x,y)]\to X$ mapping $0 \mapsto x$ and $d(x,y)\mapsto y$.
Given a closed subspace $A \subset X$ of a topological space $X$, a \textbf{retraction} $X$ to $A$ is a map $f\colon X \to A$ satisfying $f|_{A}=id_A.$ 


Given a metric space $X$ and $r>0$, the \textbf{Rips complex} $\Rips(X,r)$ is an abstract simplicial complex with the vertex set being $X$, and a finite $\sigma \subset X$ being a simplex iff $\diam(\sigma) < r$.


Given a metric space $X$ and an interval $J \subseteq \RR$,  the Rips \textbf{filtration} over $J$ is a collection $\{\Rips(X,r)\}_{r \in J}$ of simplicial complexes along with the simplicial inclusion maps $$\rho_{r,r'}\colon \Rips(C,r) \hookrightarrow \Rips(X,r),$$ which are identities on vertices for each $r \leq r'$. When $J = (0,\infty)$ we refer to the filtration simply as the Rips filtration. 
 Given an Abelian group $G$, $n\in \{0,1,\ldots\}$ and a basepoint $\bullet \in X$ we apply the homology $H_n(\_ ; G)$ or homotopy group $\pi_n(\_ ,\bullet)$ functor to a filtration to obtain \textbf{persistent homology groups} $\{H_n(\Rips(X,r);G)\}_{r \in J}$ and \textbf{persistent homotopy groups} $\{\pi_n(\Rips(X,r),\bullet)\}_{r \in J}$. Each of these is also equipped with induced (and consequently commuting) homomophisms. These are denoted by $\rho^{G,n}_{r,r'}\colon H_n(\Rips(X,r);G) \hookrightarrow H_n(\Rips(X,r');G)$ for persistent homology and 
 $\rho^{\pi_n}_{r,r'}\colon \pi_n(\Rips(X,r),\bullet) \hookrightarrow \pi_n(\Rips(X,r'),\bullet)$ for persistent homotopy groups. 
 
 Given a  field $\FF$ and an interval $J \subseteq \RR$, a \textbf{persistence module} $\M$ over $J$ is a collection of vector spaces $\{M_r\}_{r \in J}$  
 and commuting linear bonding maps $\rho_{r,r'}: M_r \to M_{r'}$.
 Given a  field $\FF$ and an interval $J' \subset J \subseteq \RR$, the \textbf{interval module} $\FF_{J'}$ is a collection of vector spaces $\{V_r\}_{r \in J}$ with 
\begin{itemize}
 \item $V_r = \FF$ for $r\in J'$;
 \item $V_r=0$ for $r \notin J'$,
\end{itemize}
 and commuting linear bonding maps $V_r \to V_{r'}$ which are identities whenever possible (i.e., for $r,r'\in J'$) and zero elsewhere. Each persistent homology (with coefficients in $\FF$) of a Rips filtration over $J$ built upon a compact metric space is a persistence module that decomposes (uniquely up to permutation of the summands) as a direct sum of interval modules (see q-tameness condition in Proposition 5.1 of \cite{Cha2}, the property of being radical in \cite{ChaObs}, and the main result in \cite{ChaObs} along with its corollaries for details). The underlying intervals of the said collection of interval modules are called \textbf{bars} and form a multiset called \textbf{barcode} of the persistence module. For each bar, its endpoints form a pair of numbers from $(0,\infty)\cup \{\infty\}$. These pairs form a multiset called a \textbf{persistence diagram}. For each element of a barcode or a persistence diagram, its \textbf{multiplicity} is the number of repetitions of the element in the said multiset. The persistence diagram of $n$-dimensional homology with coefficients in $\FF$ of a compact metric space $X$ built via open Rips complexes on an open interval $J \subseteq \RR$ is denoted by $PD(\{H_n(\Rips(X,r);\FF)\}_{r\in J})$, while the corresponding barcode is $\B(\{H_n(\Rips(X,r);\FF)\}_{r\in J})$. A barcode also encodes the nature of the endpoints of its bars and hence contains more information than a persistence diagram. However, in our setting the nature of the endpoints is ``fixed'', see Lemma \ref{LemIntervals}, and hence both structure contain the same information.
 

\section{Tight inclusions of persistence modules}
\label{SecPMs}

Fix an open interval $(a,b) \subseteq \RR$ and a field $\FF$. Given persistence modules $\M=\{M_r\}_{r \in J}$ with bonding maps $\rho^M_{r,r'}$ and $\N=\{N_r\}_{r \in J}$ with bonding maps $\rho^N_{r,r'}$, an \textbf{inclusion} $\f$ of $\M$ into $\N$ is a collection of injective linear maps $\f_r \colon M_r \to N_r$ commuting with the bonding maps $\rho^*_{*,*}$. Inclusions of persistence modules do not induce inclusions of barcodes or persistence diagrams. For example, over $\RR$, $\FF_{[2,3]}$ can be included into $\FF_{[1,3]}$ yet the persistence diagrams are disjoint. Inclusions of persistence diagrams have been shown to only prolong the ``embedding bars'' to the left (and not to the right) in case of pointwise finite-dimensional persistence modules \cite{Bauer}. Contractions, on the other hand, will be shown to induce embeddings on persistence diagrams. Working towards the proof of this statement we introduce a particular kind of inclusions of persistence modules that induce inclusions on persistence diagrams.

\begin{definition}
Fix an open interval $J \subseteq \RR$ and a field $\FF$. Inclusion $\f =\{\f_r\}_{r\in J}$ of persistence module $\M=\{M_r\}_{r \in J}$ into persistence module $\N=\{N_r\}_{r \in J}$ is \textbf{tight}, if $\forall r',r \in J, r' < r$ we have
$$
\im \rho^M_{r',r} = M_r \cap \im \rho^N_{r',r}.
$$
\end{definition}

Informally speaking, tight inclusions do not bring back the emergence of homology classes to an earlier scale, but rather include their emergence ``tightly''.

The following lemma describes the types of bars emerging from Rips filtrations. While restricting to such bars in Theorem \ref{ThmTightEmbedding} is not strictly necessary, it will simplify our treatment.

\begin{lemma}
 \label{LemIntervals}
Assume $X$ is a compact metric space, $n\in \{0,1,\ldots\}$, and $\FF$ is a field. Then each bar of $\B(\{H_n(\Rips(X,r);\FF)\}_{r\in \RR})$ is of form $(a,b]$ or $(a,\infty)$ for some $0 \leq a < b <\infty$.
\end{lemma}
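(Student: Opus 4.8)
The plan is to show two things: (1) every bar is left-open, i.e.\ of the form $(a,\ast$, and (2) every bar that is bounded on the right is right-closed, i.e.\ of the form $\ast,b]$. Combined with the fact that bars are intervals in $(0,\infty)$ with $a \geq 0$, this gives the stated dichotomy $(a,b]$ or $(a,\infty)$.

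\begin{proof}
Write $M_r = H_n(\Rips(X,r);\FF)$ with bonding maps $\rho_{r,r'}$ for $r \le r'$. By the structure theory cited in Section~\ref{SecPrelims}, $\{M_r\}_{r\in\RR}$ decomposes as a direct sum of interval modules, so it suffices to analyze the possible left and right endpoints of a single interval summand. We use the following two continuity facts about open Rips complexes on a compact metric space $X$:

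\textbf{Left-continuity.} For each $r\in\RR$, the complex $\Rips(X,r)=\bigcup_{r'<r}\Rips(X,r')$ as an increasing union, since a finite $\sigma\subset X$ has $\diam(\sigma)<r$ iff $\diam(\sigma)<r'$ for some $r'<r$. Hence $M_r = \varinjlim_{r'<r} M_{r'}$, because homology commutes with direct limits of simplicial complexes. Consequently every class in $M_r$ lies in $\im\rho_{r',r}$ for some $r'<r$, and every relation among chains already holds at some earlier scale.

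\textbf{Right-continuity in the compact-space case.} For each $r$, $\Rips(X,r)=\bigcap_{r'>r}\Rips(X,r')$ on finite subsets, and one shows (using compactness of $X$, so that the diameter function on finite tuples attains its values appropriately) that $M_r \to \varprojlim_{r'>r} M_{r'}$ is surjective on images: a class that survives to all scales $r'>r$ already survives to $r$. More precisely, for the endpoint analysis it is enough to know the filtration is ``right-continuous on images'', which follows from $q$-tameness as in Proposition~5.1 of \cite{Cha2}.

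\textbf{Left endpoint is open.} Suppose an interval summand had left-closed endpoint $a$, i.e.\ its support is $[a,b)$ or $[a,b]$ or $[a,\infty)$ with $a\in\RR$. A class $x\in M_a$ generating this summand is then not in $\im\rho_{r',a}$ for any $r'<a$. But by left-continuity $M_a=\varinjlim_{r'<a}M_{r'}$, so $x\in\im\rho_{r',a}$ for some $r'<a$, a contradiction. Hence every bar is of the form $(a,\ast$ with $a\ge 0$ (we allow $a=0$; note $a<0$ is impossible since $\Rips(X,r)$ is empty-of-edges, i.e.\ discrete, for $r$ small, so no $n$-cycle for $n\ge1$ is a nonbounding cycle at negative scales, and for $n=0$ the $0$-th homology is constant for $r\le$ any value and the bar still starts at the left end of $\RR$; we restrict attention to bars actually present, which start at some $a\ge0$).

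\textbf{Right endpoint, if finite, is closed.} Suppose an interval summand has finite right endpoint $b$ but is right-open, support $(a,b)$. Let $x\in M_r$ for $r\in(a,b)$ be the corresponding generator; then $\rho_{r,r'}(x)\ne 0$ for all $r'\in(a,b)$ but $\rho_{r,b}(x)=0$. Equivalently, the images $\rho_{r,b'}(x)$ are nonzero for all $b'<b$ yet their limit dies exactly at $b$. By right-continuity on images (the $q$-tameness/compactness input above), a class that is nonzero at all scales $b'<b$ must remain nonzero at $b$ itself, contradicting $\rho_{r,b}(x)=0$. Hence a finite right endpoint is always attained, i.e.\ the bar is $\ast,b]$.

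Combining the two endpoint analyses, each bar is $(a,b]$ with $0\le a<b<\infty$, or $(a,\infty)$ with $0\le a$.
\end{proof}

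\textbf{Remark on the main obstacle.} The genuinely nontrivial input is the right-continuity statement: left-openness of bars is an immediate and purely combinatorial consequence of the fact that $\Rips$ uses the \emph{strict} inequality $\diam(\sigma)<r$, but right-closedness of finite bars uses the compactness of $X$ in an essential way (without it one could have bars like $(a,b)$, e.g.\ for non-totally-bounded spaces). I would expect the write-up either to cite $q$-tameness (Proposition~5.1 of \cite{Cha2}) directly for this, or to give the short compactness argument: if a cycle $z$ at scale $r$ bounds at every scale $b'>b$ say via a chain $c_{b'}$ with $\diam$-supports shrinking to $\le b$, then since $X$ is compact one extracts an actual chain at scale $b$ bounding $z$; symmetrically a nonbounding cycle at all $b'<b$ stays nonbounding at $b$. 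Getting this compactness argument stated cleanly — rather than black-boxing it — is the step I'd budget the most care for.
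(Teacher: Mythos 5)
Your left-endpoint argument is correct and is exactly the paper's: a cycle is a finite collection of simplices of diameter strictly less than $r$, so it already lives in $\Rips(X,r')$ for some $r'<r$; equivalently $\Rips(X,r)=\bigcup_{r'<r}\Rips(X,r')$ and homology commutes with directed colimits. The problem is the second half. To get right-closedness you invoke a ``right-continuity'' of the open Rips filtration, claiming $\Rips(X,r)=\bigcap_{r'>r}\Rips(X,r')$ and that surjectivity on images follows from compactness/q-tameness. That identity is false for open Rips complexes: a finite set of diameter exactly $r$ belongs to $\Rips(X,r')$ for every $r'>r$ but not to $\Rips(X,r)$; the intersection from above is the \emph{closed} Rips complex. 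Likewise q-tameness (finite rank of the bonding maps) does not give the statement you need, and the compactness extraction sketched in your closing remark fails for the same reason: for $X=\{p,q\}$ with $d(p,q)=b$, the reduced $0$-cycle $[p]-[q]$ bounds in $\Rips(X,r)$ for every $r>b$ but not at $r=b$. So the step ``nonzero at all $b'<b$ implies nonzero at $b$'' is asserted on grounds that do not hold, and your diagnosis that compactness is the essential input here (and that bars $(a,b)$ could occur without it) is mistaken.

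The fix is that right-closedness follows from the \emph{same} left-continuity you already established, which is the paper's one-line argument: if a class dies at scale $b$, the nullhomology is a finite chain of simplices each of diameter strictly less than $b$, hence it already exists in $\Rips(X,b')$ for some $b'<b$; equivalently, since $M_b=\varinjlim_{b'<b}M_{b'}$, an element whose image at $b$ vanishes already vanishes at some earlier scale, so a class surviving on all of $(a,b)$ survives at $b$ and a finite bar must be $(a,b]$. No compactness or right-continuity enters at this point; compactness is only needed (via q-tameness and the results of \cite{ChaObs}) to guarantee that the interval decomposition exists in the first place, which both you and the paper take as given.
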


\begin{proof}
 Take a cycle $\alpha$ from $\Rips(X,r)$ representing a bar. Due to the strict inequality appearing in the definition of Rips complexes, there exists $r' < r$ such that $\alpha$ is also a cycle in $\Rips(X,r')$. Hence the bar is open at $a$. The same argument for a nullhomology implies that if $\alpha$ is nullhomologous in some $\Rips(X,r)$, it is also nullhomologous in some $\Rips(X,r')$ for some $r' < r$. 
\end{proof}

\begin{corollary}
 \label{CorIntervals}
Assume $X$ is a compact metric space, $n\in \{0,1,\ldots\}$,  $\FF$ is a field, and $J = (j_1,j_2)\subset \RR$ is an open interval. Then each bar of $\B(\{H_n(\Rips(X,r);\FF)\}_{r\in J})$ is of form $(a,b]$ or $(a,j_2)$ for some $j_1 \leq a < b <j_2$.
\end{corollary}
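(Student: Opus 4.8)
The plan is to deduce the Corollary from Lemma \ref{LemIntervals} with essentially no extra work, by observing that passing from the index interval $\RR$ to the subinterval $J=(j_1,j_2)$ simply intersects every bar with $J$. Restriction of a persistence module over $\RR$ to $J$ is a functor that commutes with direct sums, and for a single interval module one has $\FF_I|_J=\FF_{I\cap J}$, with the convention that an empty intersection gives the zero module, which contributes no bar. Now $\{H_n(\Rips(X,r);\FF)\}_{r\in J}$ is itself the $n$-dimensional persistent homology over an open interval of the Rips filtration of a compact metric space, so by the results recalled in Section \ref{SecPrelims} it again decomposes as a direct sum of interval modules; by uniqueness of that decomposition it must coincide with the restriction to $J$ of the $\RR$-decomposition. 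Hence, as multisets, $\B(\{H_n(\Rips(X,r);\FF)\}_{r\in J})=\{\, I\cap J : I\in\B(\{H_n(\Rips(X,r);\FF)\}_{r\in\RR}),\ I\cap J\neq\emptyset \,\}$.

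It then remains to compute $I\cap J$ for each of the two shapes allowed by Lemma \ref{LemIntervals}. If $I=(a,b]$ with $0\le a<b<\infty$ and $I\cap J\neq\emptyset$, the left endpoint of $I\cap J$ is $\max(a,j_1)$ and it is excluded in either case (it is excluded from $I$ when $a\ge j_1$, and it equals $j_1\notin J$ when $a<j_1$); the right endpoint is $b$, included, when $b<j_2$, and it is $j_2\notin J$, excluded, when $b\ge j_2$. So $I\cap J$ is $(\max(a,j_1),b]$ with $j_1\le\max(a,j_1)<b<j_2$, or $(\max(a,j_1),j_2)$ with $j_1\le\max(a,j_1)<j_2$. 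If $I=(a,\infty)$ and $I\cap J\neq\emptyset$, then $a<j_2$ and $I\cap J=(\max(a,j_1),j_2)$, again of the form $(a',j_2)$ with $j_1\le a'<j_2$. In every case $I\cap J$ is $(a,b]$ with $j_1\le a<b<j_2$ or $(a,j_2)$ with $j_1\le a<j_2$, which is exactly the claim; the boundary value $a=j_1$ is permitted by the statement and occurs precisely when the original bar started at or below $j_1$.

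As an alternative I could re-run the proof of Lemma \ref{LemIntervals} verbatim with $\RR$ replaced by $J$: a representing cycle $\alpha$ in $\Rips(X,r)$ for $r\in J$ uses finitely many simplices of diameters bounded by some $d<r$, hence is already a cycle in $\Rips(X,r')$ for any $r'$ with $\max(d,j_1)<r'<r$, and $(\max(d,j_1),r)\subseteq J$ is nonempty since $d<r$ and $j_1<r$; likewise a nullhomology of $\alpha$ at a scale $r\in J$ persists to some $r'\in(j_1,r)$. This shows every bar is left-open at each of its interior points (so its left endpoint lies in $[j_1,j_2)$ and is excluded, unless it equals $j_1$) and right-closed unless it reaches $j_2$. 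I do not expect any real obstacle: the only points needing care are the bookkeeping of open versus closed endpoints in $I\cap J$, and the remark that restriction to $J$ is compatible with the interval-module decomposition and its uniqueness; everything else is immediate from Lemma \ref{LemIntervals}.
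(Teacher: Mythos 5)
Your proposal is correct and follows essentially the same route as the paper: decompose the $\RR$-indexed module into interval modules, restrict that decomposition to $J$, and read off the possible bar shapes from Lemma \ref{LemIntervals}. You simply spell out the endpoint bookkeeping for $I\cap J$ and the appeal to uniqueness of the interval decomposition, which the paper leaves implicit.
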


\begin{proof}
 We can decompose $\B(\{H_n(\Rips(X,r);\FF)\}_{r\in \RR})$ into interval modules. Restricting these interval modules to scale span $r\in J$ we obtain  a decomposition of $\B(\{H_n(\Rips(X,r);\FF)\}_{r\in J})$. The proof now follows from Lemma \ref{LemIntervals}.
\end{proof}

For the sake of clarity of the argument of Theorem \ref{ThmTightEmbedding} we state the following simple algebraic lemma. It can be proved straight from the definitions.

\begin{lemma}
 \label{LemAlgy1}
 Let $\FF$ be a field and suppose $W^N \leq V^N$ are finite dimensional vector spaces over $\FF$. Let $q\colon V^N \to V^N / W^N$ denote the natural quotient map. Then for each subspace $V^M \leq V^N$ we have:
\begin{itemize}
 \item $\ker (q|_{V^M}) = W_N \cap V^M$, and
 \item $\dim  q(V^M) = \dim V^M  - \dim (W^N \cap V^N)$.
\end{itemize}
\end{lemma}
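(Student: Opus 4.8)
The plan is to deduce both claims directly from the rank--nullity theorem applied to the restricted linear map $q|_{V^M}\colon V^M \to V^N/W^N$; there is nothing deeper going on, exactly as the remark preceding the lemma indicates.

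First I would settle the kernel statement. By construction the natural quotient map $q$ has $\ker q = W^N$. Restricting any linear map to a subspace intersects its kernel with that subspace, so
$$
\ker(q|_{V^M}) \;=\; (\ker q)\cap V^M \;=\; W^N \cap V^M,
$$
which is the first bullet (reading the $W_N$ there as $W^N$). Then for the dimension count I would apply rank--nullity to $q|_{V^M}$, using finite-dimensionality of $V^N$ to guarantee all quantities are finite:
$$
\dim V^M \;=\; \dim\ker(q|_{V^M}) \;+\; \dim q(V^M).
$$
Substituting the kernel just computed yields $\dim q(V^M) = \dim V^M - \dim(W^N\cap V^M)$.

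I do not anticipate a genuine obstacle. The one point requiring care is bookkeeping of the superscripts: I would flag that the last term of the displayed statement should read $W^N\cap V^M$ rather than $W^N\cap V^N$ (with $V^M=V^N$ it would collapse to the trivial identity $\dim q(V^N)=\dim V^N-\dim W^N$, so the intended form is the one involving $V^M$), and this corrected version is precisely what Lemma~\ref{LemAlgy1} will be used for in the proof of Theorem~\ref{ThmTightEmbedding}. Beyond that, everything follows immediately from the definitions of the quotient map and of a restricted map.
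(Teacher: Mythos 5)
Your proof is correct and is exactly the ``straight from the definitions'' argument the paper has in mind (kernel of a restriction is the intersected kernel, then rank--nullity), so nothing more is needed. Your flag of the typos is also right: $W_N$ should read $W^N$, and the last term of the second bullet should be $W^N\cap V^M$ (as written, $W^N\cap V^N=W^N$ would make the identity false unless $W^N\subseteq V^M$), and the corrected form is indeed the one used in the proof of Theorem \ref{ThmTightEmbedding}.
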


The following theorem is the main result of this section. It states that tight inclusions of persistence modules induce inclusions of barcodes and persistence diagrams. Its formulation is tailored to our setting although it holds more generally.

\begin{theorem}
 \label{ThmTightEmbedding}
 Fix an open interval $J = (j_1,j_2)\subset \RR$ and a field $\FF$. Assume inclusion $\f =\{\f_r\}_{r\in J}$ of persistence module $\M=\{M_r\}_{r \in J}$ into persistence module $\N=\{N_r\}_{r \in J}$ is tight. If both persistence modules arise as persistent homology of compact metric spaces via open Rips filtrations (and hence admit the interval decompositions), then $\B(\M) \subseteq \B(\N)$ and $PD(\M) \subseteq PD(\N)$.
\end{theorem}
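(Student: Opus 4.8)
The plan is to show that a tight inclusion induces an injection of multisets on the level of barcodes, and then to transfer this to persistence diagrams via Lemma \ref{LemIntervals}. Since both modules decompose as direct sums of interval modules, the content of $\B(\M) \subseteq \B(\N)$ is that for every interval $I$, the multiplicity of $I$ as a bar of $\M$ is at most its multiplicity as a bar of $\N$. By Corollary \ref{CorIntervals} every bar has the form $(a,b]$ or $(a,j_2)$, so a bar is completely determined by the ordered pair of its endpoints, and the multiplicity of a bar $(a,b]$ in a module $\mathcal{P}$ can be recovered from the dimensions of images of bonding maps by an inclusion--exclusion rank formula: for $a < s \le a' < b < t$ with all parameters in $J$, the number of bars containing $(a',b]$ but open exactly at $a$ and closed exactly at $b$ is
$$
\mu_{(a,b]}(\mathcal{P}) = \big(\rank \rho_{s,b} - \rank \rho_{a,b}\big) - \big(\rank \rho_{s,t} - \rank \rho_{a,t}\big),
$$
in the limit $s \downarrow a$, $a' \uparrow b$, $t \downarrow b$ (and a similar formula, dropping the $t$-terms, for infinite bars $(a,j_2)$). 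So the theorem reduces to the inequality $\mu_I(\M) \le \mu_I(\N)$ for each such $I$.

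First I would fix $r' < r$ in $J$ and analyze the map induced by $\f$ on images of bonding maps. Tightness says precisely $\im \rho^M_{r',r} = M_r \cap \im \rho^N_{r',r}$, i.e. $\f_r$ carries $\im \rho^M_{r',r}$ isomorphically onto $M_r \cap \im \rho^N_{r',r}$ inside $N_r$. Hence, choosing a further parameter $t > r$ in $J$ and applying Lemma \ref{LemAlgy1} with $V^N = \im \rho^N_{r',r} \subseteq N_r$, $W^N = \ker \rho^N_{r,t} \cap \im\rho^N_{r',r}$, and $V^M = \f_r(\im \rho^M_{r',r}) = M_r \cap \im\rho^N_{r',r}$, I would compare $\dim \rho^M_{r',t}(M_{r'})$ with $\dim \rho^N_{r',t}(N_{r'})$. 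The key algebraic observation is that, because $\f$ commutes with bonding maps and is injective, the kernel of $\rho^N_{r,t}$ restricted to $V^M$ is exactly the image under $\f_r$ of the kernel of $\rho^M_{r,t}$ restricted to $\im \rho^M_{r',r}$; thus $\dim \rho^M_{r',t}(M_{r'}) = \dim \rho^N_{r',t}(N_{r'}) - \dim\big(\ker\rho^N_{r,t} \cap \im\rho^N_{r',r} \cap (\text{complement of } V^M)\big)$, and the correction term is manifestly nonnegative. Feeding these rank identities into the inclusion--exclusion formula for $\mu_I$ and observing that each correction term enters with the sign that makes $\mu_I(\N) - \mu_I(\M)$ a sum of dimensions of honest subspaces, one gets $\mu_I(\M) \le \mu_I(\N)$.

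Having established $\B(\M) \subseteq \B(\N)$ as multisets, the inclusion $PD(\M) \subseteq PD(\N)$ is immediate: by Lemma \ref{LemIntervals} (and Corollary \ref{CorIntervals}) the endpoint-type of every bar is forced — left endpoints are always open, right endpoints are closed when finite — so the map sending a bar to its pair of endpoints is injective on the relevant barcodes, and an inclusion of barcodes maps to an inclusion of the corresponding diagrams with multiplicities preserved or increased.

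The main obstacle I anticipate is the bookkeeping in the second paragraph: making the inclusion--exclusion rank formula genuinely rigorous requires being careful that the various images $\im \rho_{r',r}$ are finite-dimensional (which follows from q-tameness, hence from the standing compactness hypothesis, so that Lemma \ref{LemAlgy1} applies), and that one can take the relevant one-sided limits in $s,a',t$ so that ``$\rank \rho_{s,b}$'' stabilizes to count exactly the bars born at or before time $a$ — this is where the openness of $J$ and the $(a,b]$/$(a,j_2)$ normal form from Corollary \ref{CorIntervals} are doing real work. An alternative, perhaps cleaner, route would be to avoid the explicit multiplicity formula and instead build the multiset injection directly: decompose $\M \cong \bigoplus_{k} \FF_{I_k}$ and show by a dimension-count at the three parameters $r' < r < t$ that each summand $\FF_{I_k}$ of $\M$ can be matched, via $\f$, to a distinct summand of $\N$ with the same underlying interval — the tightness condition is exactly what prevents the birth time from being pushed left under $\f$, which is the phenomenon that fails for generic inclusions as in the $\FF_{[2,3]} \hookrightarrow \FF_{[1,3]}$ example.
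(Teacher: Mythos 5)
Your overall framework---expressing the multiplicity of a bar $(a,b]$ as a limit of inclusion--exclusion rank expressions over shrinking rectangles and comparing these quantities for $\M$ and $\N$---is viable, and it is essentially a repackaging of the paper's argument (the paper instead fixes $t\in(a,b)$ and counts bars via the subquotients $V^*_t/W^*_t$, with tightness entering through the identity $W^N_t\cap V^M_t=W^M_t$). The genuine gap is in the step where you actually derive $\mu_I(\M)\le\mu_I(\N)$. Your ``key algebraic observation'' is false as stated: with $V^M=\f_r(\im\rho^M_{r',r})$, the claimed identity $\dim\rho^M_{r',t}(M_{r'})=\dim\rho^N_{r',t}(N_{r'})-\dim\bigl(\ker\rho^N_{r,t}\cap\im\rho^N_{r',r}\cap(\text{complement of }V^M)\bigr)$ already fails for $\M=0$ and $\N$ an interval module with injective bonding maps (the correction term vanishes while the two ranks differ), and ``complement of $V^M$'' is not a canonical subspace, so the right-hand side is not even well defined. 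What is true is only $\ker(\rho^N_{r,t}|_{V^M})=\im\rho^M_{r',r}\cap\ker\rho^M_{r,t}$, which yields $\rank\rho^N_{r',t}-\rank\rho^M_{r',t}=\bigl(\dim\im\rho^N_{r',r}-\dim\im\rho^M_{r',r}\bigr)-\bigl(\dim(\im\rho^N_{r',r}\cap\ker\rho^N_{r,t})-\dim(\im\rho^M_{r',r}\cap\ker\rho^M_{r,t})\bigr)$, a difference of two nonnegative quantities with no predetermined sign. Consequently the assertion that ``each correction term enters with the sign that makes $\mu_I(\N)-\mu_I(\M)$ a sum of dimensions of honest subspaces'' is precisely the content of the theorem and is nowhere proved; termwise inequalities $\rank\rho^M_{u,v}\le\rank\rho^N_{u,v}$ cannot simply be fed into an alternating sum.

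The gap can be closed inside your framework. Since $\rank\rho_{s,b}-\rank\rho_{s,t}=\dim(\im\rho_{s,b}\cap\ker\rho_{b,t})$, the rectangle count for $\N$ equals $\dim\bigl((A\cap K)/(B\cap K)\bigr)$ with $B=\im\rho^N_{a,b}\subseteq A=\im\rho^N_{s,b}$ and $K=\ker\rho^N_{b,t}$; tightness gives $\im\rho^M_{s,b}=M_b\cap A$ and $\im\rho^M_{a,b}=M_b\cap B$, while injectivity alone gives $\ker\rho^M_{b,t}=M_b\cap K$, so the count for $\M$ equals $\dim\bigl((M_b\cap A\cap K)/(M_b\cap B\cap K)\bigr)$. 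The natural map $(M_b\cap A\cap K)/(M_b\cap B\cap K)\to(A\cap K)/(B\cap K)$ is injective, which gives the inequality for every rectangle and hence in the limit; the boundary cases $a=j_1$ (where $\rho_{a,\cdot}$ is undefined) and the bars $(a,j_2)$ need the same argument with the corresponding terms dropped. This missing intersection-of-image-and-kernel argument is exactly what the paper's Claims 1 and 2 supply, so without it your proposal asserts rather than proves the crucial inequality.
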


\begin{proof}
Throughout the proof we consider $M_r$ to be a subspace of $N_r$ via the inclusion $\f, \forall r.$
 By Corollary \ref{CorIntervals} we only have to consider two types of bars. First let us assume $I=(a,b] \subset J$ is a bar we consider. 
 
 Choose $t\in (a,b)$. For each $*\in \{\M, \N\}$ define (see \cite{Boe} or \cite{ChaObs} for background) $W^*_t \leq V^*_t \leq N_t$  as follows:
 $$
 V^*_t = \bigcap_{s\in (a,t)} \im \rho^*_{s,t} \cap \bigcap_{s>b} \ker \rho^*_{t,s},
 $$
 $$
 W^*_t = \left( \im \rho^*_{a,t} \cap \bigcap_{s>b} \ker \rho^*_{t,s} \right) + \left(  \bigcap_{s\in (a,t)} \im \rho^*_{s,t} \cap \ker \rho^*_{t,b} \right).
 $$
These expressions provide the number of bars of the form $(a',b']$ containing $t$ as follows:
\begin{itemize}
 \item $\dim V^*_t$ is the number of bars with $a' \leq a$ and $b' \leq b$;
  \item the dimension of the first term of $W^*_t$ is the number of bars with $a' < a$ and $b' \leq b$;
    \item the dimension of the second term of $W^*_t$ is the number of bars with $a' \leq a$ and $b' < b$.
\end{itemize}
The multiplicity of bar $(a,b]$ in a barcode of $*\in \{\M, \N\}$ is $\mu^* = \dim (V^*_t / W^*_t) = \dim V^*_t - \dim W^*_t$. Within this setup we state two claims:
\begin{description}
 \item[Claim 1] $V^M_t \leq V^N_t$. This claim follows from our assumptions.
  \item[Claim 2] $W^N_t \cap V^M_t = W^M_t$. Let us prove this claim. As $V^M_t$ and $W^N_t$ both contain $W^M_t$ we have $W^N_t \cap V^M_t \supseteq W^M_t$. In order to prove the other inclusion choose $v\in V^M_t \cap W^N_t$:
	\begin{itemize}
 		\item for each $t'$  $v\in \ker \rho^N_{t,t'}$ implies $v\in \ker \rho^M_{t,t'}$;
		\item for each $t'$ $v\in \im \rho^N_{t',t}$ implies $v\in \ker \rho^M_{t',t}$ by the tight inclusion assumption as $v\in V^M_t \subseteq M_t$.
	\end{itemize}
	Combining these two implications with the conditions that $v\in W^N_t$ implies $v\in W^M_t$, which proves $W^N_t \cap V^M_t \subseteq W^M_t$ and thus Claim 2. 
\end{description}
Now let $\mu^M = \dim V^M_t/W^M_t$ and  $\mu^N= \dim V^N_t/W^N_t$ denote the multiplicity of $I$ in $\M$  and $\N$ respectively. Using the above claims and Lemma \ref{LemAlgy1} for the quotient map $q\colon V^N_t \to V^N_t \to V^N_t/W^N_t$ we conclude
$$
\mu^N = \dim V^N_t/W^N_t = \dim q(V^N_t) \geq  \dim q(V^M_t) = \dim V^M_t - \dim (W^N_t \cap V^N_t) =
$$
$$
= \dim V^M_t - \dim W^M_t = \mu^M.
$$
Hence $\f$ induces an injection on bars of form $I=(a,b]$.

Intervals of the form $(a,j_2)$ are treated in the same way by choosing $t \in (a,j_2)$ and defining
$$
 V^*_t = \bigcap_{s\in (a,t)} \im \rho^*_{s,t},
 $$
 $$
 W^*_t = \left( \im \rho^*_{a,t} \right) + \left(  \bigcap_{s\in (a,t)} \im \rho^*_{s,t} \cap \bigcap_{s>t} \ker \rho^*_{t,s} \right).
 $$
\end{proof}

\section{Contractions in persistence}
\label{SecPers}

Throughout this section let $A\subset X$ be a closed subspace of a metric space $X$ and let $i \colon A \hookrightarrow X$ be the associated inclusion. 

\begin{definition}
 Let $r>0$. An $r$-\textbf{contraction} of $X$ to $A$ is a retraction $f\colon X \to A$ for which
 $$
 d(x,y) < r \implies d(f(x),f(y)) < r.
 $$
 A maps $X \to A$ is a \textbf{contraction} if it is an $r$-contraction for each $r>0$.  
\end{definition}

\begin{remark}
 Contractions are $1$-Lipschitz retractions. It should be apparent that the property of being a contraction is much stronger than the property of being an $r$-contraction for some $r$.
\end{remark}

\begin{proposition} [Contractions induce retractions at single scale]
\label{PropScale1}
 Suppose $f \colon X \to A$ is an $r$-contraction for some $r>0$. Then:
\begin{enumerate}
 	\item The induced map $\bar f \colon \Rips(X,r) \to \Rips(A,r)$ is a simplicial retraction.
	\item Map $i$ induces injection on all homology and homotopy groups.
\end{enumerate}
\end{proposition}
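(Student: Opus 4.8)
The plan is to prove both parts essentially by hand, working directly from the definition of an $r$-contraction.

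For part (1), the key observation is that an $r$-contraction $f\colon X \to A$ maps any finite subset $\sigma \subset X$ with $\diam(\sigma) < r$ to a finite subset $f(\sigma) \subset A$ with $\diam(f(\sigma)) < r$: indeed, for any two points $x,y \in \sigma$ we have $d(x,y) < r$, hence $d(f(x),f(y)) < r$ by the defining implication, and taking the maximum over the finitely many pairs preserves the strict inequality since a finite set of numbers strictly below $r$ has maximum strictly below $r$. Therefore $f$ sends simplices of $\Rips(X,r)$ to simplices of $\Rips(A,r)$, so the vertex map $f\colon X \to A$ is simplicial; call the induced simplicial map $\bar f$. Since $\Rips(A,r)$ is a full subcomplex of $\Rips(X,r)$ on the vertex set $A$ (a finite $\tau \subset A$ is a simplex of $\Rips(A,r)$ iff $\diam(\tau) < r$ iff it is a simplex of $\Rips(X,r)$), and since $f|_A = \mathrm{id}_A$ because $f$ is a retraction, the restriction of $\bar f$ to $\Rips(A,r)$ is the identity. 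Hence $\bar f$ is a simplicial retraction onto $\Rips(A,r)$.

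For part (2), I would deduce it formally from part (1). The inclusion $i\colon A \hookrightarrow X$ induces a simplicial inclusion $\bar i \colon \Rips(A,r) \hookrightarrow \Rips(X,r)$, and by part (1) the composite $\bar f \circ \bar i$ is the identity on $\Rips(A,r)$. Applying the homology functor $H_n(\_;G)$ (for any coefficient group $G$) or the homotopy group functor $\pi_n(\_,\bullet)$ (taking the basepoint $\bullet \in A$, which is legitimate since $f$ fixes $A$ pointwise) to this identity of spaces gives that $(\bar f)_* \circ (\bar i)_* = \mathrm{id}$ on $H_n(\Rips(A,r);G)$, respectively on $\pi_n(\Rips(A,r),\bullet)$. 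A map of groups (or modules) admitting a left inverse is injective, so $(\bar i)_* = i_*$ is injective on all homology and homotopy groups at scale $r$.

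I do not anticipate a serious obstacle here; the statement is essentially an unwinding of definitions. The one point that requires a moment of care is the strict-versus-non-strict inequality in the Rips complex definition: one must use that a \emph{finite} simplex has finitely many vertex pairs so that $\diam(f(\sigma)) = \max_{x,y\in\sigma} d(f(x),f(y)) < r$ still holds strictly; this is exactly the same phenomenon exploited in Lemma \ref{LemIntervals}. A second minor point is checking that $\Rips(A,r)$ sits inside $\Rips(X,r)$ as the full subcomplex spanned by $A$, which is immediate from the definition since the simplex condition $\diam(\tau)<r$ does not reference the ambient space. With these two observations in place, both parts follow.
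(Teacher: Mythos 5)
Your proof is correct. Part (1) is the same unwinding of definitions the paper relies on (the paper simply says it ``follows straight from the definition''), and you supply the two small checks worth making explicit: finiteness of a simplex to keep the diameter inequality strict, and the fact that $\Rips(A,r)$ sits in $\Rips(X,r)$ as the full subcomplex on $A$. For part (2) you take a genuinely different, more formal route: from $\bar f \circ \bar i = \mathrm{id}_{\Rips(A,r)}$ you conclude injectivity of $(\bar i)_*$ by functoriality, since a map with a left inverse is injective. The paper instead argues at the chain level: it takes a cycle $\alpha$ in $\Rips(A,r)$ that bounds a chain $\beta$ in $\Rips(X,r)$ and observes that $\bar f(\beta)$ bounds $\alpha$ inside $\Rips(A,r)$, with the homotopy case handled ``the same way'' via simplicial representatives. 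The content is equivalent, but your argument is cleaner and buys something concrete in the homotopy-group case: it avoids any appeal to simplicial representatives or approximation, since one only applies the functor $\pi_n(\_,\bullet)$ (with basepoint in $A$, fixed by $f$) to the identity $\bar f \circ \bar i = \mathrm{id}$. Notably, the paper itself records this viewpoint in the remark following the proposition (``a particular case of homotopy dominance''), so your proof essentially promotes that remark to the proof of (2).
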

 
\begin{proof}
 Part (1) follows straight from the definition. In order to prove (2) choose $n \in \{0,1,\ldots\}$ and a homology element in $H_n(\Rips(A,r);G)$ represented by an $n$-cycle $\alpha$. If $\alpha = \di \beta$ for some $(n+1)$-cycle in $\Rips(X,r)$, then $\bar f(\beta)$ (obtained by applying $f$ to vertices involved in $\beta$) is an $(n+1)$-cycle in $\Rips(A,r)$ demonstrating that $[\alpha]=0$, hence the statement holds for homology groups. The proof for homotopy groups is the same using simplicial representatives of maps.
\end{proof}
 
\begin{remark}
 Statement (1) of Proposition \ref{PropScale1} implies that $\bar f \circ \bar i = id|_{\Rips(A,r)}$. This is a particular case of homotopy dominance. 
\end{remark}
 
\begin{remark}
Contractions induce retractions on Rips complexes at all scales. In an analogous way, \cite{Haus} introduced crushings as maps which behave like deformation retraction on Rips complexes. Crushings were used under the name deformation contractions in \cite{ZV2} to prove local variant of the main result of this paper. 

In a similar manner, $r$-contractions induce retractions on Rips complexes at scale $r$. Analogous maps are $r$-crushings of \cite{Lat}, which induce deformation retractions on Rips complexes at scale $r$. 
\end{remark}

\begin{proposition} [Contractions induce retractions at multiple scales]
\label{PropScale2}
 Let $J \subseteq \RR$ be an open interval. Suppose $f \colon X \to A$ is an $r$-contraction for all $r \in J$. Then:
\begin{enumerate}
	\item Map $i$ induces injection on all persistent homology and persistent homotopy groups on the interval $r \in J$.
	\item For any $\FF$ and $n\in \{0,1,\ldots\}$ the inclusion 
	$$
	\{H_n(\Rips(A,r);\FF)\}_{r\in J} \hookrightarrow \{H_n(\Rips(X,r);\FF)\}_{r\in J}
	$$
	of persistence modules induced by $i$ is tight.
\end{enumerate}
\end{proposition}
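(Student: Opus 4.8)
The plan is to leverage Proposition \ref{PropScale1} at each individual scale and then bootstrap to the multi-scale statement. For part (1), fix $r \in J$. Since $f$ is an $r$-contraction, Proposition \ref{PropScale1}(1) gives a simplicial retraction $\bar f_r \colon \Rips(X,r) \to \Rips(A,r)$ with $\bar f_r \circ \bar i_r = \mathrm{id}$ on $\Rips(A,r)$. Applying $H_n(\_;G)$ (or $\pi_n$) functorially yields $(\bar f_r)_* \circ (\bar i_r)_* = \mathrm{id}$, so $(\bar i_r)_*$ is a split injection at every scale $r \in J$; these injections commute with the bonding maps because $i$ is a simplicial map compatible with the inclusions $\rho_{r,r'}$, so they assemble into an inclusion of persistence modules over $J$. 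That handles (1); the content is really just naturality plus Proposition \ref{PropScale1}.

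For part (2), I need to verify the tightness condition: for all $r' < r$ in $J$,
$$
\im \rho^A_{r',r} = H_n(\Rips(A,r);\FF) \cap \im \rho^X_{r',r},
$$
where I am identifying $H_n(\Rips(A,r);\FF)$ with its image in $H_n(\Rips(X,r);\FF)$ under the injection $i_*$ from part (1), and $\rho^A, \rho^X$ denote the respective bonding maps. The inclusion $\subseteq$ is automatic from commutativity of $i_*$ with bonding maps. For the reverse inclusion $\supseteq$, suppose $\alpha \in H_n(\Rips(A,r);\FF)$ is such that $i_*\alpha = \rho^X_{r',r}(\gamma)$ for some $\gamma \in H_n(\Rips(X,r');\FF)$. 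I want to show $\alpha \in \im \rho^A_{r',r}$. The natural candidate is $(\bar f_{r'})_*(\gamma) \in H_n(\Rips(A,r');\FF)$: I claim $\rho^A_{r',r}\big((\bar f_{r'})_*(\gamma)\big) = \alpha$. To see this, chase the square relating $\bar f_{r'}, \bar f_r, \rho^A_{r',r}, \rho^X_{r',r}$ — which commutes because $f$ is a retraction at both scales and the induced simplicial maps are all identities on vertices — and use that $(\bar f_r)_* \circ i_* = \mathrm{id}$ on $H_n(\Rips(A,r);\FF)$. Explicitly:
$$
\rho^A_{r',r}\big((\bar f_{r'})_*\gamma\big) = (\bar f_r)_*\big(\rho^X_{r',r}\gamma\big) = (\bar f_r)_*\big(i_*\alpha\big) = \alpha.
$$

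The main obstacle, such as it is, is checking that the square
$$
\begin{CD}
H_n(\Rips(X,r');\FF) @>\rho^X_{r',r}>> H_n(\Rips(X,r);\FF) \\
@V(\bar f_{r'})_*VV @VV(\bar f_r)_*V \\
H_n(\Rips(A,r');\FF) @>\rho^A_{r',r}>> H_n(\Rips(A,r);\FF)
\end{CD}
$$
genuinely commutes at the level of simplicial maps before applying homology. This holds because both composites $\Rips(X,r') \to \Rips(A,r)$ are induced by the same vertex map (namely $f$ restricted to vertices, followed by inclusion) and a simplicial map is determined by its action on vertices; one only needs to confirm the relevant simplices are carried to simplices, which follows from $f$ being an $r'$-contraction (hence an $r$-contraction, since $r' < r$ forces diameters below $r'$ to stay below $r$ — more carefully, the $r'$-contraction property gives the $r'$-level simplicial map, and the target simplices then lie in $\Rips(A,r')\subseteq \Rips(A,r)$). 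After that, everything is a routine diagram chase, and the same argument works verbatim for persistent homotopy groups using simplicial representatives. I would also remark that tightness follows formally from the existence of a compatible family of retractions at all scales of $J$, independent of the specific form of the modules, so the compactness hypothesis is only needed to invoke the interval decomposition later in Theorem \ref{ThmTightEmbedding}.
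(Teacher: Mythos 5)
Your proof is correct and follows essentially the same route as the paper: part (1) from Proposition \ref{PropScale1} plus functoriality, and for tightness the same candidate preimage $(\bar f_{r'})_*\gamma$, which the paper verifies by the explicit chain computation $\di \bar f(\gamma) = \alpha - \bar f(\beta)$ while you phrase it as a diagram chase through the commuting square together with $(\bar f_r)_* \circ i_* = \mathrm{id}$. One cosmetic point: an $r'$-contraction is not automatically an $r$-contraction for $r' < r$ (the definition only constrains pairs at distance below $r'$), but this is harmless here because $f$ is assumed to be an $r$-contraction for every $r \in J$, which is exactly what your \emph{more carefully} clause actually uses.
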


\begin{proof}
 Statement (1) is apparent from Proposition \ref{PropScale1} and definitions. In order to prove statement (2) choose an $n$-cycle $\alpha$ representing an element in $H_n(\Rips(A,r);\FF)$. If $[\alpha] =\rho^{\FF,n}_{r',r}[\beta]$ for some  $n$-cycle $\beta$ representing an element in $H_n(\Rips(X,r');\FF)$, then (as in Proposition \ref{PropScale1}), $[\alpha] =\rho^{\FF,n}_{r',r}[\bar f (\beta)]$ with $\bar f (\beta)$ being an $n$-cycle in \newline $H_n(\Rips(A,r');\FF)$. Indeed: if $\alpha - \beta = \di \gamma$ for some $(n+1)$-chain in $\Rips(X,r)$, then 
 $$
\di \bar f (\gamma)=\bar f (\di \gamma)=  \bar f (\alpha - \beta) = \alpha -  \bar f ( \beta).
 $$
\end{proof}

\begin{theorem}
\label{ThmMain1}
 Let $J \subseteq \RR$ be an open interval, $X$ a compact metric space, $\FF$ a field, and $n\in \{0,1,\ldots\}$. Suppose $f \colon X \to A$ is an $r$-contraction for all $r \in J$. Then there are inclusions of barcodes and persistence diagrams:
\begin{itemize}
 \item $\B(\{H_n(\Rips(A,r);\FF)\}_{r\in J})\subseteq \B(\{H_n(\Rips(X,r);\FF)\}_{r\in J})$ and
 \item $PD(\{H_n(\Rips(A,r);\FF)\}_{r\in J})\subseteq PD(\{H_n(\Rips(X,r);\FF)\}_{r\in J})$.
\end{itemize}
\end{theorem}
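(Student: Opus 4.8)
The plan is to combine the two results already in hand: Proposition~\ref{PropScale2}(2) tells us that the inclusion of persistence modules $\{H_n(\Rips(A,r);\FF)\}_{r\in J} \hookrightarrow \{H_n(\Rips(X,r);\FF)\}_{r\in J}$ induced by $i$ is \emph{tight}, and Theorem~\ref{ThmTightEmbedding} tells us that a tight inclusion of persistence modules arising as persistent homology of compact metric spaces via open Rips filtrations induces inclusions $\B(\M)\subseteq\B(\N)$ and $PD(\M)\subseteq PD(\N)$. So the entire argument is a matter of checking that the hypotheses of Theorem~\ref{ThmTightEmbedding} are met and then invoking it.

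Concretely, I would first fix $J$, $X$, $\FF$, $n$ and the $r$-contraction $f\colon X\to A$ for all $r\in J$ as in the statement, and set $\M=\{H_n(\Rips(A,r);\FF)\}_{r\in J}$ and $\N=\{H_n(\Rips(X,r);\FF)\}_{r\in J}$. Next I would note that $A$, being a closed subspace of the compact metric space $X$, is itself compact, so both $\M$ and $\N$ arise as persistent homology of compact metric spaces via open Rips filtrations; in particular both admit interval decompositions (by the q-tameness discussion recalled in the Preliminaries). Then I would invoke Proposition~\ref{PropScale2}(2) to say that the map $\f$ of persistence modules induced by the inclusion $i\colon A\hookrightarrow X$ is an inclusion (it is injective on each $H_n(\Rips(A,r);\FF)$ by Proposition~\ref{PropScale1}(2), applied at each scale $r\in J$, and commutes with the bonding maps since $i$ is simplicial) and that this inclusion is tight. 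At that point all hypotheses of Theorem~\ref{ThmTightEmbedding} are in place, and it yields exactly the two claimed containments $\B(\M)\subseteq\B(\N)$ and $PD(\M)\subseteq PD(\N)$, which is the statement of Theorem~\ref{ThmMain1}.

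There is essentially no obstacle here: the real work was already carried out in Proposition~\ref{PropScale2} (tightness, via the observation that $\bar f$ carries a bounding chain in $\Rips(X,r)$ to a bounding chain in $\Rips(A,r)$, so classes cannot be ``born earlier'' in $X$ than in $A$) and in Theorem~\ref{ThmTightEmbedding} (the bar-counting argument with the subspaces $W^*_t\le V^*_t$ and Lemma~\ref{LemAlgy1}). The only point deserving a word of care is the reduction from ``an open interval $J$'' to the compactness hypothesis of Theorem~\ref{ThmTightEmbedding}: one should explicitly record that closed subsets of compact metric spaces are compact, so that $\M$ too is of the required form and Corollary~\ref{CorIntervals} applies to it. If anything, this theorem is a bookkeeping corollary of the two preceding results rather than an independent argument.
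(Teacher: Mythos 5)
Your proposal is correct and follows essentially the same route as the paper: invoke Proposition \ref{PropScale2}(2) for tightness of the inclusion induced by $i$, then apply Theorem \ref{ThmTightEmbedding} to obtain the inclusions of barcodes and persistence diagrams. Your added remark that $A$, being closed in the compact space $X$, is itself compact (so Theorem \ref{ThmTightEmbedding} applies to both modules) is a small point the paper leaves implicit, but it is the same argument.
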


\begin{proof}
 By (2) of Proposition \ref{PropScale2} the inclusion of persistence modules exists and is tight. By Theorem \ref{ThmTightEmbedding} the interval decompositions exist and the tight inclusion induces inclusions of barcodes and persistence diagrams.
\end{proof}


\begin{corollary}
 \label{CorMain1}
 Let  $X$ be a compact metric space, $\FF$ a field, and $n\in \{0,1,\ldots\}$. Suppose $f \colon X \to A$ is a contraction. Then there are inclusions of barcodes and persistence diagrams:
\begin{itemize}
 \item $\B(\{H_n(\Rips(A,r);\FF)\}_{r\in \RR})\subseteq \B(\{H_n(\Rips(X,r);\FF)\}_{r\in \RR})$ and
 \item $PD(\{H_n(\Rips(A,r);\FF)\}_{r\in \RR})\subseteq PD(\{H_n(\Rips(X,r);\FF)\}_{r\in \RR})$.
\end{itemize}
\end{corollary}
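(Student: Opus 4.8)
The plan is to derive Corollary \ref{CorMain1} from Theorem \ref{ThmMain1} by taking $J = (0,\infty)$, which is an open interval of $\RR$. The hypothesis that $f \colon X \to A$ is a contraction means precisely, by definition, that $f$ is an $r$-contraction for every $r > 0$, i.e., for every $r \in J$. Thus the hypotheses of Theorem \ref{ThmMain1} are met with this choice of $J$, and the two displayed inclusions of barcodes and persistence diagrams follow immediately. The only point requiring a word of comment is the notational identification $\{H_n(\Rips(X,r);\FF)\}_{r\in\RR}$ versus $\{H_n(\Rips(X,r);\FF)\}_{r\in(0,\infty)}$: Rips complexes $\Rips(X,r)$ are empty (or irrelevant) for $r \le 0$ since no nonempty simplex has negative diameter, so the persistence module over $\RR$ and the one over $(0,\infty)$ carry the same barcode and persistence diagram, matching the conventions fixed in the Preliminaries (where the Rips filtration is defined over $(0,\infty)$).

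I would therefore write the proof in essentially one line: apply Theorem \ref{ThmMain1} with $J = (0,\infty)$. There is no genuine obstacle here — the corollary is the global specialization of the already-proved multi-scale statement, and all the real work (establishing tightness of the inclusion in Proposition \ref{PropScale2}, and that tight inclusions induce inclusions of barcodes and diagrams in Theorem \ref{ThmTightEmbedding}) has been done upstream. If anything, the only thing to be slightly careful about is making sure the compactness hypothesis on $X$ is invoked (it is needed for the interval decomposition to exist, via the q-tameness discussion in the Preliminaries), which is indeed part of the statement of the corollary.

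\begin{proof}
 Apply Theorem \ref{ThmMain1} with $J = (0,\infty)$. Since $f$ is a contraction, it is an $r$-contraction for every $r \in (0,\infty)$, so the hypotheses of Theorem \ref{ThmMain1} are satisfied, and the claimed inclusions of barcodes and persistence diagrams follow.
\end{proof}
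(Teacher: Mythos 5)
Your proof is correct and is exactly the paper's (implicit) argument: the corollary is stated as the immediate specialization of Theorem \ref{ThmMain1} to $J=(0,\infty)$, since a contraction is by definition an $r$-contraction for every $r>0$. Your side remark on the $r\in\RR$ versus $r\in(0,\infty)$ notation matches the convention already fixed in the Preliminaries, so nothing further is needed.
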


With these results we are able to justify interpretation of the example provided in Figure \ref{FigEssence}. 

\section{Contractions in metric graphs}
\label{SecMG}

In this section we discuss existence of contractions on metric graphs. We are particularly interested in contractions onto loops isometric to a geodesic $S^1$ as these are essentially the only spaces for which we know the entire persistent homology. 

\begin{definition}
 Given a geodesic space $X$, a \textbf{geodesic circle} is a simple closed loop in $X$, whose subspace metric makes it a geodesic space. 
\end{definition}

Geodesic circles have been discussed in our context in \cite{ZV, ZV2}. 

\begin{proposition}
\label{PropContr1}
 Suppose $X$ is a compact, locally contractible geodesic space. If there exists a contraction $f\colon X \to \alpha$ onto a simple closed curve $\alpha \subset X$, then $\alpha$ is a geodesic circle and $[\alpha]$ is a member of a lexicographically shortest homology basis of $H_1(X;G)$ in any coefficients $G$. 
\end{proposition}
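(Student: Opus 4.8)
The plan is to establish the two assertions separately, both by exploiting that a contraction $f\colon X\to\alpha$ is a $1$-Lipschitz retraction that does not increase lengths of paths. First I would show $\alpha$ is a geodesic circle, i.e.\ that the subspace metric on $\alpha$ agrees with its intrinsic (arc-length) metric. Since $\alpha$ is a simple closed curve in a compact geodesic space it has a well-defined length $|\alpha|$, and the intrinsic metric on $\alpha$ dominates the subspace metric. For the reverse inequality, given two points $x,y\in\alpha$, take a geodesic $\g$ in $X$ from $x$ to $y$; then $f\circ\g$ is a path in $\alpha$ from $x$ to $y$ of length at most $\operatorname{length}(\g)=d_X(x,y)$ because $f$ is $1$-Lipschitz (hence nonexpanding on path lengths). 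This bounds the intrinsic distance in $\alpha$ from above by $d_X(x,y)$, giving equality, so $\alpha$ with the subspace metric is isometric to a circle of circumference $|\alpha|$, i.e.\ a geodesic circle.

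Next, for the homology-basis claim, the key point is that $f$ realizes $[\alpha]$ as a ``shortest'' generator in a strong sense: any loop $\beta$ in $X$ with $f_*[\beta]\neq 0$ in $H_1(\alpha;G)\cong G$ must have length at least $|\alpha|$, since $f\circ\beta$ is a loop in $\alpha$ that wraps around $\alpha$ a nonzero number of times (the nonzero class forces degree $\neq 0$), and a degree-$k$ loop on a circle of circumference $|\alpha|$ has length $\ge |k|\,|\alpha|\ge|\alpha|$, while $\operatorname{length}(f\circ\beta)\le\operatorname{length}(\beta)$. So any cycle whose class is not killed by $f_*$ has length $\ge|\alpha|=\operatorname{length}(\alpha)$. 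Now suppose, for contradiction, that $[\alpha]$ is not in a lexicographically shortest homology basis: then in the standard greedy/matroid-type construction of a shortest basis there is a homology basis in which every basis element distinct from (and no longer than) $\alpha$ has length $<|\alpha|$, or $[\alpha]$ can be replaced by something strictly shorter; more precisely one can produce a cycle $z$ with $\operatorname{length}(z)<|\alpha|$ and $[z]$ independent of the classes of all strictly-shorter-than-$\alpha$ basis loops, forcing $[z]$ to have nonzero image under the composite $H_1(X;G)\to H_1(X;G)/\langle\text{shorter classes}\rangle$ and, after arranging $f_*$ to be compatible (using that $f_*[\alpha]$ generates), forcing $f_*[z]\neq0$. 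That contradicts $\operatorname{length}(z)<|\alpha|$ by the previous paragraph. Local contractibility is used to guarantee $H_1$ is well-behaved (finitely generated, singular = \v{C}ech, loops represent classes) so that the shortest-basis formalism applies.

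The main obstacle I expect is making the ``lexicographically shortest homology basis'' argument precise: one must fix the definition (a basis $[\alpha_1],\dots,[\alpha_k]$ of $H_1(X;G)$ realized by loops, ordered by nondecreasing length, that is lexicographically minimal among all such), handle general coefficient groups $G$ (where ``basis'' needs care — one likely reduces to $G=\ZZ$ and $G$ a field, or phrases it via generating sets of minimal total/lexicographic length), and verify the exchange property: if $[\alpha]$ were not part of some lexicographically shortest basis, there is a shorter loop realizing a class independent of all strictly shorter chosen classes. The clean way is: among all loops of length $<|\alpha|$, the classes they span, together with the fact that $f_*$ vanishes on all of them (by the length bound), shows those classes lie in $\ker f_*$; but $[\alpha]\notin\ker f_*$, so $[\alpha]$ is independent of every class represented by a loop shorter than $\alpha$, which is exactly the exchange condition ensuring $[\alpha]$ belongs to a lexicographically shortest basis. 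I would present it in this last form, as it sidesteps the greedy construction and works uniformly in $G$.
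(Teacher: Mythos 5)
Your core argument is correct, and it differs from the paper's in an interesting way. For the basis claim you and the paper use the same engine: $f$ does not increase path lengths, and any loop whose image under $f_*$ is nonzero in $H_1(\alpha;G)$ must wind nontrivially around $\alpha$, hence have length at least $|\alpha|$. The paper phrases this as a contradiction from a hypothetical decomposition $[\alpha]=[\beta]+[\gamma]$ into two shorter loops; your formulation via $\ker f_*$ (every class of a loop shorter than $|\alpha|$ lies in $\ker f_*$, while $[\alpha]$ does not, so $[\alpha]$ is independent of the span of all shorter classes) is a bit cleaner and handles arbitrary finite sums at once. Where you genuinely diverge is the geodesic-circle claim: the paper gets it for free by citing \cite{ZV} (``all members of a lexicographically shortest basis are geodesic circles''), whereas you prove it directly --- for $x,y\in\alpha$ push an $X$-geodesic into $\alpha$ by the $1$-Lipschitz retraction to get a path in $\alpha$ of length exactly $d_X(x,y)$, which after arc-length reparametrization is a geodesic in the subspace metric. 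This is more elementary and self-contained; the paper's route buys the geodesic-circle statement and the existence/finiteness of the lexicographically shortest basis in one citation.

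Two caveats. First, your opening assertion that a simple closed curve in a compact geodesic space ``has a well-defined length $|\alpha|$'' is false in general (non-rectifiable loops exist); it is the contraction itself that forces rectifiability, so you should derive finiteness of $|\alpha|$ from your geodesic-circle argument rather than assume it beforehand. Second, the passage from ``$[\alpha]$ is not in the span of classes of strictly shorter loops'' to ``$[\alpha]$ is a member of a lexicographically shortest homology basis'' rests on the greedy/exchange formalism for such bases with arbitrary coefficients $G$; you flag this but do not supply it, and for general abelian $G$ (torsion, no canonical generator of $H_1(\alpha;G)\cong G$) it is not a pure matroid argument. The paper is equally terse here and defers the definition, existence and finiteness of such bases for compact, locally contractible spaces to \cite{ZV}, so to make your version complete you would likewise need to invoke that framework (this is also where local contractibility and compactness are actually used).
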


\begin{proof}
Let $\ell$ be the length of $\alpha$ and choose an Abelian group $G$.
Assume $[\alpha]=[\beta] + [\gamma]$, with $\beta$ and $\gamma$ being loops in $X$ of lengths shorter than $\ell$. Let $\tilde f \colon H_1(X; G)\to H_1(\alpha;G)$ be the map induced by $f$. Then,  computing in $H_1(\alpha;G)$, we have
$$
1 =[\alpha]=[f(\alpha)]= \tilde f [\alpha]=\tilde f [\beta] + \tilde f [\gamma]= [f(\beta)] + [f(\gamma)]. 
$$
Hence at least one of the last two terms, say $[f(\beta)]$, is non-trivial. This means that the winding number of $f(\beta)$ in $\alpha$ is non-trivial and thus $f(\beta)$ is of length at least $\ell$. But this is a contradiction as a contraction decreases the length and $\beta$ was assumed to be of length less than $\ell$. Hence $\alpha$ is a member of a lexicographically shortest homology base of $H_1(X;G)$.

 By \cite{ZV} a compact locally contractible space has a finite lexicographically shortest homology basis of $H_1(X;G)$ in any coefficients $G$, and all members of any such basis are geodesic circles. 
\end{proof}

A natural follow-up question is whether the required condition of Proposition \ref{PropContr1} for the existence of a contraction onto a simple loop is sufficient. We will answer that question in a negative way in the context of metric graphs by Example \ref{Ex01}. 

\begin{definition}
 A \textbf{metric graph} is a geodesic space homeomorphic to a finite $1$-dimensional simplicial complex.
\end{definition}

The following example was suggested by Arseniy Akopyan. 

\begin{example}
\label{Ex01}
Consider two concentric circles of lengths
    $1000$ and $999$, along with additional connections as 
    shown by Figure \ref{fig:counterexample}. The inner circle $A$ of length $999$ induces a member of a shortest homology basis [A] in homology. However, there is no contraction of this metric 
    graph onto $A$, as any contraction would 
    have to map a loop of length $1993$
    that goes around most of the inner loop once and around 
    most of the outer loop once,
    changing between them at the cross at the bottom, twice around $A$.
 \begin{figure}
  \includegraphics{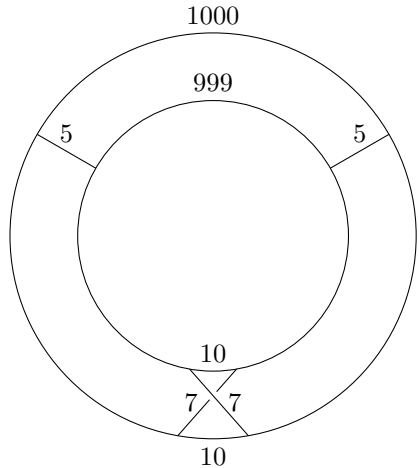}
  \caption{A sketch of Example \ref{Ex01}. Numbers $1000$ and $999$ indicate the lengths of concentric circles, while other numbers indicate the lengths of short segments. 
}
  \label{fig:counterexample}
\end{figure}
\end{example}

\begin{example}
\label{Ex02}
A similar example disproving the converse of Proposition \ref{PropContr1} can be designed by treating a standard flat Klein bottle $K$ obtained by identifying sides of a square. Recall that $H_1(K;\ZZ) \cong \ZZ \oplus \ZZ_2$, meaning that the shortest homology basis has at two elements (corresponding to ``horizontal'' and ``vertical'' lines on the defining square). However,  due to the shift in torsion we have $H^1(K;\ZZ) \cong \ZZ$ and as the elements of the later cohomology groups correspond to homotopy classes of maps to $K(\ZZ,1)=S^1$, there isn't even a continuous retraction of $K$ onto a loop generating torsion in $H_1(K;\ZZ)$.
\end{example}

In both these examples we appear to have used the fact that the space is not planar. This motivates the following conjecture. 

\begin{conjecture}
\label{Conj}
Suppose $X$ is a planar metric graph and $\alpha \subset X$ is a simple closed loop such that $[\alpha]$ is a member of a shortest homology basis of $H_1(X;G)$. We conjecture that then there exists a contraction $X \to \alpha$.
\end{conjecture}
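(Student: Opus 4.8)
\textbf{Proof proposal for Conjecture \ref{Conj}.}

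The plan is to construct the contraction $f\colon X \to \alpha$ explicitly by exploiting planarity. Embed $X$ in $S^2$; since $\alpha$ is a simple closed loop it separates $S^2$ into two open disks, and hence $X \setminus \alpha$ splits into two (possibly empty) parts $X^+$ and $X^-$ lying on the two sides. The idea is to build $f$ by ``projecting'' each side onto $\alpha$ separately. On $\alpha$ itself $f$ must be the identity; for a point $x$ in one of the sides, a natural candidate is to send $x$ to the point of $\alpha$ realizing $d(x,\alpha)$ — but this is not well defined in general, so the better approach is to pick, for each $x$, a shortest path from $x$ to $\alpha$ and let $f(x)$ be its endpoint, then patch these choices continuously using the graph structure (finitely many edges and vertices). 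The key point to establish is that this assignment can be made $1$-Lipschitz, equivalently an $r$-contraction for all $r$, which amounts to checking that two points at distance $<r$ cannot be sent to points of $\alpha$ at distance $\geq r$.

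The first step I would carry out is to reduce to the case where $X$ is a \emph{minimal} witness: delete from $X$ any edge whose removal does not destroy the property that $[\alpha]$ lies in a shortest homology basis, so that every remaining edge is ``used'' by some shortest cycle. Using the classification from \cite{ZV}, a lexicographically shortest homology basis of a planar metric graph can be taken to consist of the boundary cycles of the bounded faces of the planar embedding (this is the well-known fact that for planar graphs the bounded face cycles form a shortest cycle basis, and with care this can be arranged for the \emph{homology} basis as well). The hypothesis then forces $\alpha$ to be isotopic to one of these face-boundary cycles, which is the structural handle that Example \ref{Ex01} lacked: in that example $\alpha$ is a shortest basis element but is \emph{not} a face boundary, and a long cycle winds across it twice.

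Next, using this structure, I would define $f$ face by face. Think of $\alpha$ as the boundary of a bounded region $R$; the ``inside'' part $X \cap R$ should be retracted onto $\alpha$ along shortest paths staying inside $R$, and the ``outside'' part $X \setminus \bar R$ should be collapsed onto $\alpha$ as well. For the inside, the relevant estimate is that if $x,y \in X\cap R$ with $d(x,y)<r$, a shortest $xy$-path $p$ together with shortest paths from $x$ and $y$ to $\alpha$ bounds a region whose portion of $\alpha$ has length at most $d(x,y)$ plus the difference of the two distances to $\alpha$ — here planarity is essential, since in a planar graph a path cannot ``hop over'' $\alpha$ without crossing it. For the outside, one contracts each remaining component onto the arc of $\alpha$ on its boundary; because these components attach to $\alpha$ along connected arcs (again by planarity), the same length bound holds. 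Continuity and the retraction property $f|_\alpha = \mathrm{id}$ are then immediate from the construction.

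The main obstacle I anticipate is precisely the Lipschitz estimate across the transition between the two sides of $\alpha$ and across branch points of $X$ on $\alpha$: a priori a short path could leave $\alpha$, travel through $X^+$, come back to $\alpha$ far away, and one must rule out that $f$ stretches such a path. This is exactly the phenomenon Example \ref{Ex01} exhibits, so the proof must use planarity in an essential, quantitative way — presumably via a Jordan-curve argument showing that any cycle crossing from one side of $\alpha$ to the other does so an even number of times, and that $f$ therefore wraps it around $\alpha$ with absolute winding number at most its own length divided by $\ell(\alpha)$. Making this quantitative bound uniform over all cycles, not just embedded ones, is the delicate part; I expect one can reduce to tight (locally geodesic) cycles and then argue combinatorially on the finitely many faces. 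A secondary difficulty is arranging the shortest homology basis to be realized by face boundaries simultaneously with the lexicographic-shortness hypothesis; if that realization fails in edge cases, one may need to weaken the conclusion or add a genericity assumption on the edge lengths.
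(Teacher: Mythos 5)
You are addressing a statement the paper explicitly leaves open: Conjecture \ref{Conj} has no proof in the paper, so there is nothing to compare your argument against, and as written your text is a programme rather than a proof. The decisive step --- the $1$-Lipschitz estimate for the patched ``project to a nearest point of $\alpha$ and comb'' map, uniformly over pairs of points on either side of $\alpha$ and across branch points --- is exactly what you defer to future work, and it is the entire content of the conjecture. Note that the analogous construction in the paper, Theorem \ref{ThmContrMetGraphs}, obtains the Lipschitz property only because $\alpha$ there is a globally shortest noncontractible loop: Claims 1 and 2 of that proof derive contradictions from loops of length less than $2\ell$, an estimate that is simply not available when $\alpha$ is merely a member of a shortest homology basis. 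So the part of your plan that is concrete is borrowed from a setting whose hypotheses you do not have, and the part that would use planarity quantitatively (the Jordan-curve/winding-number bound) is not carried out.

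There is also a concrete false step in the reduction you propose to do first. It is not true that the bounded face cycles of a planar (metric) graph form a shortest cycle basis, nor that a lexicographically shortest homology basis can always be realized by face boundaries, in any embedding. Take $K_4$ with vertices $a,b,c,d$, edge lengths $1$ on $ab,bc,cd,da$ and $10$ on the diagonals $ac,bd$. The planar embedding is essentially unique and every face boundary is a triangle of length $12$, yet the $4$-cycle $abcd$ has length $4$ and lies in every lexicographically shortest homology basis of $H_1$. Hence the hypothesis of Conjecture \ref{Conj} does not force $\alpha$ to be a face boundary, and the ``structural handle'' on which your face-by-face definition of $f$ rests disappears (the conjecture may still be true for such $\alpha$, but your route does not reach it). Separately, the nearest-point assignment $x \mapsto$ (endpoint of a chosen shortest path to $\alpha$) is in general neither well defined nor $1$-Lipschitz after an arbitrary continuous patching; in Theorem \ref{ThmContrMetGraphs} this is repaired by combing toward a fixed basepoint $a$ along $\alpha$, and any genuine attack on the conjecture would need an analogous globally coherent choice together with a planarity argument replacing the ``shorter than $2\ell$'' contradictions.
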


A positive answer to a conjecture could be combined with the main results of this paper and \cite{AA} to show that for each member of a shortest homology basis of $H_1(X;G)$ of a planar metric graph $X$ the barcodes of $\{H_*(\Rips(X,r);\FF)\}_{r > 0}$ contain corresponding odd-dimensional bars in all odd dimensions (similarly to Corollary \ref{CorLast}).

We end this section by proving that a contraction onto a shortest loop in a metric graph always exists. Throughout the forthcoming proof we will be using the following simple fact: given an injective path $\gamma$ in a metric graph and another path $\gamma'$ with the same endpoints, if the interiors of $\gamma$ and $\g'$ are disjoint, then the paths form a non-contractible loop.

\begin{theorem}
\label{ThmContrMetGraphs}
 Suppose $X$ is a metric graph and $\alpha \subset X$ is a shortest (non-contractible) loop in $X$. Then there exists a contraction $X \to \alpha$.
\end{theorem}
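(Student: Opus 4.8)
The plan is to build the contraction $f\colon X \to \alpha$ explicitly by a nearest-point projection with respect to the intrinsic metric on $X$, and to verify $1$-Lipschitzness along edges. First I would fix $\alpha$ to be a shortest non-contractible loop, of length $\ell$, and note that by compactness such a loop exists. For each point $x \in X$ consider the distance $d(x,\alpha)$ and a point $p(x)\in\alpha$ realizing it; the key geometric observation is that for $x \notin \alpha$ the shortest path from $x$ to $\alpha$ attaches to $\alpha$ at a single well-defined point, except on a ``cut locus'' where two (or more) shortest paths to $\alpha$ arrive at distinct points of $\alpha$. At such a cut point $x$, the two shortest paths from $x$ to $\alpha$ together with the shorter arc of $\alpha$ between their endpoints would form a loop; I would argue this loop is non-contractible only in a controlled way, and in fact the ``ambiguity'' in where to project is exactly an arc of $\alpha$, so one can make a consistent choice of one of the two endpoints. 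Concretely, I would define $f$ on the complement of $\alpha$ by first retracting each tree-like piece hanging off $\alpha$ onto its attaching point, reducing to the case where $X$ deformation-retracts onto the union of cycles; then on that core, cut $X$ open along a spanning structure so that $X \setminus \alpha$ becomes a disjoint union of arcs and trees attached to $\alpha$ at both ends, and define $f$ to ``fold'' each such piece into $\alpha$ isometrically-or-shorter, toward the nearer endpoint.

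The heart of the argument is checking the contraction inequality $d(x,y) < r \implies d(f(x),f(y)) < r$, equivalently $1$-Lipschitzness, which reduces to the following local claim: for any edge $e$ of (a sufficiently fine subdivision of) $X$, the restriction $f|_e$ is $1$-Lipschitz into $\alpha$ with the geodesic metric. For an edge not meeting the cut locus this is immediate because $f$ is locally the nearest-point projection, which is $1$-Lipschitz in any geodesic space (the triangle inequality along the two shortest paths to $\alpha$). For an edge crossing the cut locus, $f$ jumps from one side of an arc of $\alpha$ to the other, but the length of that arc is bounded by the discrepancy of the two competing distances, which goes to zero as one approaches the cut point — so after choosing one endpoint consistently, $f$ is continuous and the Lipschitz bound survives. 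The subtle point — and the one I expect to be the main obstacle — is global consistency: one must choose, coherently across all of $X$, which ``side'' each ambiguous point projects to, so that the resulting $f$ is (i) continuous, (ii) a retraction (fixes $\alpha$ pointwise), and (iii) $1$-Lipschitz across every edge including those where several shortest-path structures interact. This is where minimality of $\alpha$ is used: if a folded loop of length $L$ were to map to $\alpha$ with winding number $\geq 2$ (as in Example \ref{Ex01}), then $L \geq 2\ell$, forcing a sub-loop of length $< \ell$ to appear, contradicting that $\alpha$ is shortest. So minimality prevents exactly the obstruction that kills the conjecture's non-planar counterexamples, and it should let me globally orient the folding.

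In more detail, here is the step order I would follow. Step 1: reduce to a ``core'' graph by collapsing trees, and reduce to a finite simplicial model, refining the triangulation so that $d(\cdot,\alpha)$ is affine on each edge and the cut locus is a subcomplex. Step 2: on each point $x$, let $q_1(x), q_2(x) \in \alpha$ be the feet of the (at most two, generically one) shortest paths from $x$ to $\alpha$, and let $A(x) \subseteq \alpha$ be the shorter arc between them; show $|A(x)| \le$ (oscillation of $d(\cdot,\alpha)$ near $x$), hence $|A(x)| \to 0$ at the cut locus. Step 3: define $f$ on the cut locus first, as a $1$-Lipschitz map into $\alpha$ (here one must check the cut locus itself, with the induced length metric, admits such a map to $\alpha$ — this uses that the cut locus is a graph and that minimality of $\ell$ bounds winding numbers, so the cut locus maps to $\alpha$ degree-consistently). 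Step 4: extend $f$ off the cut locus by following shortest paths: each component of $X$ minus the cut locus is foliated by shortest paths to a single point of $\alpha$, so define $f$ to be constant along each such path equal to that foot point; this is manifestly $1$-Lipschitz transverse to the foliation and agrees with Step 3 on the boundary. Step 5: verify $f|_\alpha = \mathrm{id}$ (the shortest path from a point of $\alpha$ to $\alpha$ is trivial, using that $\alpha$ is a shortest loop so it has no ``shortcut'' through the rest of $X$ — here minimality is essential again, since otherwise a chord could make $f$ move points of $\alpha$), and assemble the local $1$-Lipschitz bounds into the global contraction property via the definition of the geodesic metric on $X$. I expect Steps 3 and 5 — handling the cut locus coherently and ensuring $\alpha$ is genuinely fixed — to be where all the real work and all the uses of minimality are concentrated.
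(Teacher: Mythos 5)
There is a genuine gap, and it sits exactly where you located the ``real work'': Steps 2--4 are built on a false estimate, and the cut-locus repair cannot work. Your Step 2 claims that the arc $A(x)$ between the two feet $q_1(x),q_2(x)$ of shortest paths to $\alpha$ is controlled by the oscillation of $d(\cdot,\alpha)$ near $x$, hence shrinks to zero at the cut locus. Test this on the simplest configuration: $\alpha$ a circle of length $2\ell$ together with one chord of length $c$ joining points $p,q\in\alpha$ at arc-distance $s$ along $\alpha$ (minimality of $\alpha$ only forces $c+s\geq 2\ell$, so $s$ can be close to $\ell$). On the chord the foot map takes only the two values $p$ and $q$, jumping at the midpoint; there the two feet are at distance $s$ apart in $\alpha$, while the two competing distances agree. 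So $|A(x)|$ does not tend to zero at the cut locus, nearest-point projection has a macroscopic jump, and no choice of ``side'', no definition of $f$ on the cut locus first (Step 3), and no extension that is constant along shortest paths (Step 4) can be continuous --- approaching the cut point from the two sides forces values near $p$ and near $q$ respectively. The winding-number remark you invoke does not remove this obstruction; it is not a matter of global orientation but of an essential discontinuity of the foot map.

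The paper's proof avoids projection altogether. It fixes a basepoint $a\in\alpha$ (with antipode $a'$), sets $U=\bigcup_{x\in\alpha}B(x,d(a,x))$, and for $p\in\alpha$ lets $T_p$ be the points of $U$ whose nearest point of $\alpha$ is $p$; minimality of $\alpha$ is used to show the $T_p$ are pairwise disjoint and are trees (otherwise one assembles a non-contractible loop of length $<2\ell$). The map $f$ then ``combs'' $U$ toward $a$: a point $x\in T_p$ is sent to the point of $\alpha$ at distance $d(p,x)$ from $p$ along the arc from $p$ to $a$, and everything outside $U$ is sent to $a$. The set $U$ is calibrated so that the combing reaches exactly $a$ on the boundary of $U$, which is what makes $f$ continuous across the locus where your projection jumps (in the chord example, both halves of the chord are wrapped along $\alpha$ and the overflow is absorbed at $a$). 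One-Lipschitzness is then immediate by cutting any geodesic into subsegments that $f$ maps either isometrically into $\alpha$ or to the constant $a$. So the missing ideas in your proposal are the basepoint $a$, the region $U$, and the isometric wrapping of the fibers $T_p$ along $\alpha$ (rather than collapsing them to their feet); without them the $1$-Lipschitz retraction cannot even be made continuous.
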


\begin{proof}
By Proposition \ref{PropContr1} $\alpha$ is a geodesic circle. Let $2\ell$ be the length of $\alpha$.
Fix a point $a\in \alpha$ and let $a'\in \alpha$ be the point opposite to $a$, i.e., $d(a,a')=\ell$. We may assume that neither $a$ nor $a'$ is a vertex of $X$. Define 
$$
U = \bigcup_{x\in \alpha} B(x,d(a,x)).
$$
Furthermore, for each $p\in \alpha$ let $T_p$ denote the set of all points of $U$, whose closest point on $\alpha$ is $p$. Observe that $U = \bigcup_{p\in \alpha} T_p$. Let $\mathcal{P} \subset \alpha$ denote the (finite) subset of all points $p\in \alpha$, for which $T_p$ is not a singleton. Observe that $a' \notin \mathcal{P}$. We proceed by two claims.

\textbf{Claim 1}: $T_p \cap T_q = \emptyset, \forall p \neq q.$ If this was not the case there would exist $p\neq q\in \alpha$, and $v\in T_p \cap T_q$. We could then choose geodesics $\gamma_p$ from $v$ to $p$ and $\gamma_q$ from $v$ to $q$. Let $\gamma'$ be a geodesic from $p$ to $q$ along $\alpha$. Concatenating $\g_p, \g_q,$ and $\g'$ we obtain a loop $\g$. As the interior of $\g'$ is disjoint from $\g_p$ and $\g_q$, the loop $\g$ is not contractible. Its length is
$$
d(v,p)+ d(v,q) + d(p,q) < d(a,p) + d(a,q) + d(p,q) \leq 2 \ell,
$$ 
which contradicts the assumption of $\alpha$ being a shortest non-contractible loop. This proves Claim 1.

\textbf{Claim 2}: For each $p\in \mathcal{P}, T_p$ is a tree. Working towards the proof of the claim we again assume the conclusion does not hold, i.e., we assume there exists a simple closed loop $\beta$ in $T_p$ for some $p\in \alpha$. Let $b\in \beta$ be a point closest to $p$ and let $\gamma_b$ be a geodesic between the two points. As the length of $\beta$ is larger than $2 \ell$, we can choose a point $c\in \beta$ such that the length of $\gamma'$, which is defined as a shortest segment along $\beta$ from $b$ to $c$, equals $d(p,a)-d(p,b)$. Let $\gamma_c$ denote a geodesic from $c$ to $p$. As $c\in T_p$, its length is less than $d(a,p)$. Paths $\g_b, g', $ and $\g_c$ form a loop $\g$ in $T_p$. Paths $\g_b$ and $\g'$ only intersect at $a$ by definition and their concatenation is of length $d(p,a)$. Path $\g_c$ is shorter and thus $\g$ is not contractible. The length of $\g$ is 
$$
d(p,b) + \big(d(p,a)-d(p,b)\big) + d(c,p) < d(p,a) + d(p,a) \leq 2\ell.
$$
Again, this is a contradiction with our assumptions and thus Claim 2 holds.

We proceed by defining a contraction $f \colon X \to \alpha$. The map can informally be described as ``combing $U$ towards $a$ along $\alpha$'', see Figure \ref{Fig1}. In particular, for $x \in T_p$ define $f(x)$ as the point on the geodesic segment from $p$ to $a$ along $\alpha$ with $d(p,f(x))=d(p,x)$. By the claims above this defines a continuous map on each $T_p$ and also on $U$. For $x \notin U$ we define $f(x)=a$. We next show $f$ is a contraction. Let $\g$ be a geodesic between $x,y\in X$. We can decompose $\g$ into segments such that each segment is contained in a single edge of $X$ and also either in $U$ or $X\setminus U$. By definition $f$ maps each segment either via an isometric embedding to $\alpha$ or to a constant map at $a$. Hence the length of $f(\g)$ does not exceed the length of $\g$. As $X$ is geodesic this implies $f$ is a contraction (and in particular, continuous). 
\end{proof}

\begin{figure}[htbp]
\begin{center}
\includegraphics{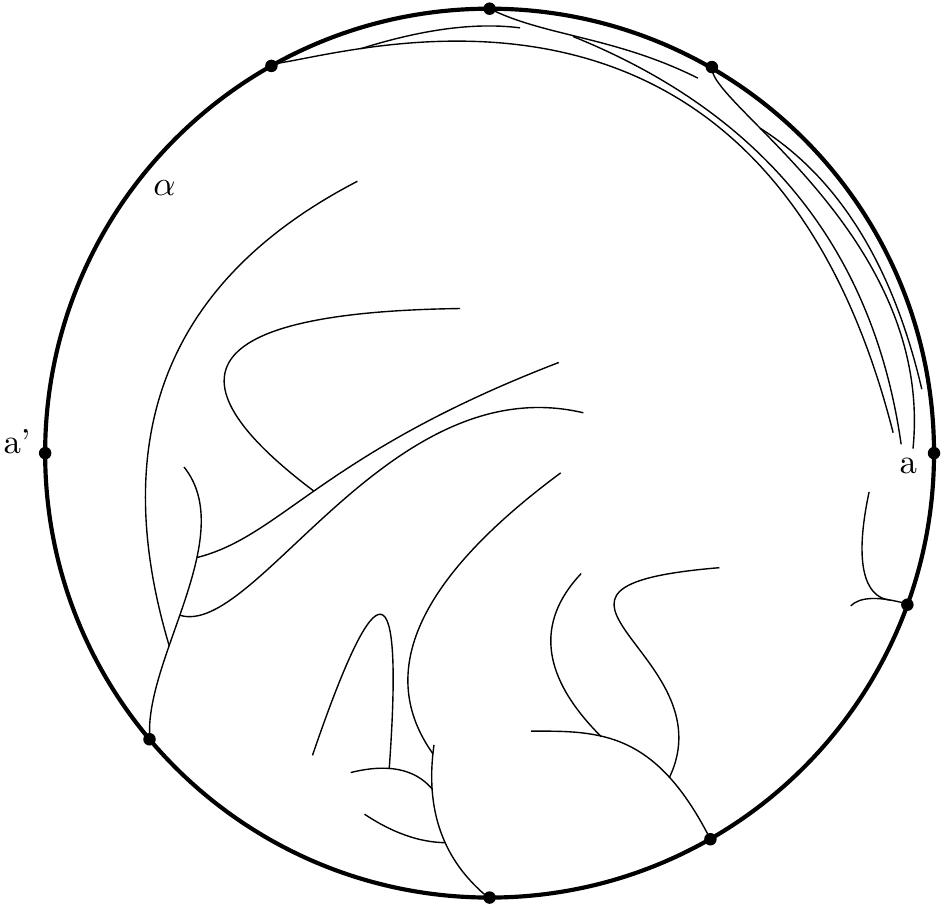}
\caption{A sketch of set $U$ from the proof of Theorem \ref{ThmContrMetGraphs}: trees $T_p$ are attached to loop $\alpha$. Attached trees on the upper half are combed towards $a$ indicating function $f$.}
\label{Fig1}
\end{center}
\end{figure}

\begin{corollary}
\label{CorLast}
 Suppose $X$ is a metric graph and $\alpha \subset X$ is a shortest loop in $X$. Let $\ell$ be the length of $\alpha$. Then for each $k \in \{0,1,2,\ldots\}$, the barcode $$\B(\{H_{2k+1}(\Rips(X,r);\FF)\}_{r > 0})$$ contains a bar $\left( \frac{k \ell}{2k + 1}, \frac{(k+1)\ell}{2k + 3} \right]$ induced by inclusion $\alpha \hookrightarrow X$.
\end{corollary}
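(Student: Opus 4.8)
The plan is to combine the two main threads of the paper: Theorem \ref{ThmContrMetGraphs}, which gives a contraction $f\colon X \to \alpha$ onto the shortest loop, and Corollary \ref{CorMain1}, which lifts such a contraction to an inclusion of barcodes (and persistence diagrams) in every dimension. Concretely, since $\alpha$ is a shortest non-contractible loop in the metric graph $X$, Proposition \ref{PropContr1} tells us $\alpha$ is a geodesic circle, and Theorem \ref{ThmContrMetGraphs} produces a contraction $X \to \alpha$. Applying Corollary \ref{CorMain1} with $n = 2k+1$ yields
$$
\B(\{H_{2k+1}(\Rips(\alpha,r);\FF)\}_{r>0}) \subseteq \B(\{H_{2k+1}(\Rips(X,r);\FF)\}_{r>0}),
$$
with the inclusion realized by the map induced by $\alpha \hookrightarrow X$. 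So it remains only to identify the $(2k+1)$-dimensional barcode of the Rips filtration of a geodesic circle.

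The second ingredient is the Adamaszek--Adams computation \cite{AA} of the homotopy type of $\Rips(\alpha, r)$ for a geodesic circle $\alpha$ of circumference $\ell$: as $r$ increases, $\Rips(\alpha,r)$ passes through the homotopy types $S^1, S^3, S^5, \ldots$, where $\Rips(\alpha,r) \simeq S^{2k+1}$ precisely for $r$ in the scale window
$$
\frac{k}{2k+1}\,\ell \;<\; r \;\le\; \frac{k+1}{2k+3}\,\ell
$$
(with the usual care about which endpoints are open or closed; the paper's convention of open Rips complexes — strict diameter inequality — forces the left endpoint open, and Lemma \ref{LemIntervals} guarantees the resulting bar has the form $(a,b]$). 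Taking $(2k+1)$-dimensional homology with coefficients in $\FF$, the only nontrivial contribution in that dimension over the whole filtration is a single interval module supported exactly on $\left(\frac{k\ell}{2k+1}, \frac{(k+1)\ell}{2k+3}\right]$. Hence $\B(\{H_{2k+1}(\Rips(\alpha,r);\FF)\}_{r>0})$ consists of exactly this one bar, and by the barcode inclusion above it appears, with the same endpoints, in $\B(\{H_{2k+1}(\Rips(X,r);\FF)\}_{r>0})$, induced by $\alpha \hookrightarrow X$. This is the claimed statement.

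The step requiring the most care is matching the endpoint conventions in \cite{AA} with the open-Rips-complex convention used here. Adamaszek and Adams typically work with a fixed parameter and either $<$ or $\le$ in the diameter condition, and the transition values between consecutive spheres are where $\Rips(\alpha, r)$ is homotopy equivalent to a wedge of spheres of two consecutive odd dimensions; one must check that with the strict inequality $\diam(\sigma) < r$ the $(2k+1)$-sphere's homology class is born at $r > \tfrac{k}{2k+1}\ell$ (open) and dies at $r = \tfrac{(k+1)}{2k+3}\ell$ (the bar being closed on the right, consistent with Lemma \ref{LemIntervals}). This is essentially a bookkeeping exercise once \cite{AA} is taken as a black box, but it is the only place where something could genuinely go wrong; everything else is a direct concatenation of Corollary \ref{CorMain1}, Theorem \ref{ThmContrMetGraphs}, and the cited computation. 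Note also that the same argument, together with the more refined information that the inclusion-induced map is an isomorphism onto a summand, shows the bar is genuinely \emph{induced by} $\alpha \hookrightarrow X$ and not merely present for unrelated reasons.
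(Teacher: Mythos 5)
Your proof is correct and follows exactly the paper's route: the paper's own proof is the one-line combination of Theorem \ref{ThmContrMetGraphs} and Corollary \ref{CorMain1}, with the Adamaszek--Adams computation \cite{AA} supplying the bar $\left(\frac{k\ell}{2k+1}, \frac{(k+1)\ell}{2k+3}\right]$ for the geodesic circle. Your additional care about the open/closed endpoint conventions (via Lemma \ref{LemIntervals}) is a sensible elaboration of the same argument, not a different approach.
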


\begin{proof}
 The statement holds by Theorem \ref{ThmContrMetGraphs} and Corollary \ref{CorMain1}.
\end{proof}

\newpage

\end{document}